\newtheorem{proposition}{Proposition}[section]
\newtheorem{lemma}[proposition]{Lemma}
\newtheorem{corollary}[proposition]{Corollary}
\newtheorem{theorem}[proposition]{Theorem}
\theoremstyle{definition}
\newtheorem{definition}[proposition]{Definition}
\newtheorem{example}[proposition]{Example}
\theoremstyle{remark}
\newcommand{\lemlabel}[1]{\label{lem:#1}}
\newcommand{\lemref}[1]{Lemma~\ref{lem:#1}}
\newcommand{\thelabel}[1]{\label{the:#1}}
\newcommand{\theref}[1]{Theorem~\ref{the:#1}}
\def\ot{\otimes}
\def\-{{\rm-}}
\def\dim{{\rm dim}}
\def\ov{\overline}
\def\el{\rm el}
\def\rk{\rm rk}
\def\Proj{\rm Proj}
\def\Gal{\rm Gal}
\def\mapright#1{\smash{\mathop{\longrightarrow}\limits^{#1}}}
\def\hookmapright#1{\smash{\mathop{\hookrightarrow}\limits^{#1}}}
\newcommand{\veq}{\mathrel{\rotatebox{90}{$=$}}}
\begin{document}
\title{Clifford theory for glider representations}
\author[F. Caenepeel]{Frederik Caenepeel}
\address{Department of Mathematics, University of Antwerp, Antwerp, Belgium}
\email{Frederik.Caenepeel@uantwerpen.be}
\author[F. Van Oystaeyen]{Fred Van Oystaeyen}
\address{Department of Mathematics, University of Antwerp, Antwerp, Belgium}
\email{Fred.Vanoystaeyen@uantwerpen.be}
\subjclass[2010]{20C05}
\keywords{Clifford theory, fragment}
\begin{abstract}
Classical Clifford theory studies the decomposition of simple $G$-modules into simple $H$-modules for some normal subgroup $H \triangleleft G$. In this paper we deal with chains of normal subgroups $1 \triangleleft G_1 \triangleleft \cdots \triangleleft G_d =G$, which allow to consider fragments and in particular glider representations. These are given by a descending chain of vector spaces over some field $K$ and relate different representations of the groups appearing in the chain. Picking some normal subgroup $H \triangleleft G$ one obtains a normal subchain and one can construct an induced fragment structure. Moreover, a notion of irreducibility of fragments is introduced, which completes the list of ingredients to perform a Clifford theory.
\end{abstract}

\thanks{The first author is Aspirant PhD Fellow of FWO}

\maketitle
\section{Introduction}
In his original paper \cite{Cl}, A. H. Clifford elucidates the behaviour of simple $G$-representations $V$ when considered as $H$-representations for some normal subgroup $H \triangleleft G$, where $G$ is some finite group. It appears that there are two possiblities; the induced $H$-representation $V_H$ is either itself simple, or decomposes into irreducible components all of the same degree, which are moreover conjugate relative to $G$ to one another. By conjugate representations $V$ and $W$ we mean that there is some $g \in G$ such that $h \cdot V = g^{-1}hg \cdot W$ for all $h \in H$. Clifford constructs a so-called decomposition group $H \subset G' \subset G$, which fully determines the representation $V$. This raises the opposite question of embedding a simple $H$-representation into some given simple $G$-representation, and a crucial role is again played by the same decomposition group $G'$. The answer is given by
\begin{theorem}\thelabel{classic}
A given irreducible $H$-representation $U$ can be embedded in an irreducible $G$-representation $V$ if and only if 
\begin{enumerate}
\item the subgroup $G' \subset G$ consisting of elements $g \in G$ such that the conjugate $H$-representation $g \cdot U$ is equivalent to $U$, is of finite index in $G$;
\item $U$ can be embedded in a simple $G'$-representation $U'$.
\end{enumerate}
If the ground field $K$ is algebraically closed, the latter condition is equivalent to the existence of a representation of finite degree of the group algebra $K[G'/H]$ with multiplication
$$u_S u_T = \alpha(s,t)^{-1}u_{ST},$$
where $s, t$ are representatives of $S,T \in G'/H$ and where $\alpha: G' \times G' \to K$ is a factor-set associated to $U$.
\end{theorem}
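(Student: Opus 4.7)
The plan is to prove the two implications separately and then treat the characterization via a twisted group algebra over an algebraically closed field. Throughout I would use the action of $G$ on the set of isomorphism classes of irreducible $H$-representations, given by $g\cdot U$ where $(g\cdot U)(h)=U(g^{-1}hg)$; normality of $H$ guarantees that this is well-defined.

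For the necessity direction, suppose $U \hookrightarrow V$ with $V$ an irreducible $G$-representation. I would restrict $V$ to $H$ and decompose $V|_H$ into its isotypic components. Since $H$ is normal, $G$ permutes these components, and because $V$ is $G$-irreducible this permutation action must be transitive. The stabilizer of the $U$-isotypic component is exactly the decomposition group $G'$, so by orbit–stabilizer $[G:G']$ equals the number of isotypic components, which is finite because $V$ is finite-dimensional; this gives (1). To obtain (2), I would take $U'$ to be the $U$-isotypic component of $V|_{G'}$ itself: it contains $U$, and irreducibility of $U'$ as a $G'$-representation follows because any proper $G'$-subrepresentation would, on translating by coset representatives $\{g_1,\dots,g_n\}$ of $G'$ in $G$, yield a proper $G$-subrepresentation of $V=\bigoplus g_i\cdot U'$, contradicting $G$-irreducibility.

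For the sufficiency direction, I start with $U\hookrightarrow U'$ where $U'$ is a simple $G'$-representation and $[G:G']<\infty$, and I claim the induced representation $V:=\mathrm{Ind}_{G'}^{G} U'$ is simple and contains $U$. The embedding $U\hookrightarrow V$ is immediate since $U'$ sits inside $V$ as the summand indexed by the trivial coset. The heart of the argument is to show $V$ is simple: I would compute $V|_H=\bigoplus_i g_i\cdot(U'|_H)$ with $g_i$ coset representatives, observe that the $g_i\cdot U'$ are pairwise non-isomorphic as $G'$-representations (because the classes of irreducible $H$-constituents lie in distinct $G'$-orbits — this uses the definition of $G'$), and then deduce via Mackey/Frobenius reciprocity that $\mathrm{End}_{G}(V)\cong \mathrm{End}_{G'}(U')$, which is a skew field by Schur; hence $V$ is simple. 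I expect this step — the irreducibility of the induced module — to be the main obstacle, since it packages several ingredients (normality, the orbit-stabilizer setup, Mackey decomposition, Schur's lemma) and has to be executed without circularity.

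Finally, for the equivalence in the algebraically closed case, I would analyse how $U'$ extending $U$ up to $G'$ corresponds to lifting the $H$-action on $U$ projectively along the quotient $G'\to G'/H$. Choosing any set-theoretic lift $g\mapsto T_g$ of the $G'$-action commuting with $H$ up to scalars (Schur's lemma gives us the scalars because $K$ is algebraically closed and $U$ is absolutely irreducible), the associativity defect $T_sT_t=\alpha(s,t)T_{st}$ defines a $2$-cocycle $\alpha$ on $G'$ that descends to $G'/H$; genuine extensions $U'$ of $U$ then correspond bijectively to finite-dimensional modules over the twisted group algebra $K^\alpha[G'/H]$ with relations $u_Su_T=\alpha(s,t)^{-1}u_{ST}$. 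I would verify that this assignment is functorial and preserves simplicity, closing the equivalence and completing the proof.
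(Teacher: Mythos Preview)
The paper does not provide its own proof of this theorem: it is stated in the Introduction as the classical result of Clifford (with explicit reference to \cite{Cl}) and serves purely as motivation and background for the glider-theoretic generalization that follows. There is therefore no argument in the paper to compare your proposal against.

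That said, your outline is essentially Clifford's original strategy, and the necessity direction is cleanly stated. For the sufficiency direction you should be a bit more careful on two points. First, you implicitly use that $U'|_H$ is $U$-isotypic; this is true, but it requires the observation that the isotypic components of $U'|_H$ are $G'$-stable (since $G'$ fixes the class of $U$) together with the simplicity of $U'$, and you should make that explicit rather than folding it into the Mackey step. Second, writing ``$g_i\cdot U'$ as $G'$-representations'' is problematic because $G'$ need not be normal in $G$; the cleaner route is the one you hint at: restrict a putative nonzero $G$-submodule $W\subset V$ to $H$, look at its $U$-isotypic piece inside $U'$, invoke simplicity of $U'$, and then use the $G$-action to recover all of $V$. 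With those clarifications your sketch matches the classical proof the paper is citing.
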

Glider representations of a finite group $G$ are defined for some chain of normal subgroups $ 1 \triangleleft G_1 \triangleleft \cdots \triangleleft G_{d-1} \triangleleft G_d = G$. These are substructures of a $KG$-module $\Omega$, $K$ a base field, say $M$, with given descending chain $M = M_0 \supset M_1 \supset \cdots \supset M_d \supset \cdots$ such that $KG_i \cdot M_j \subset M_{j-i}$ for $i \leq j$ where the operation of $KG_i$ on $M_j$ is induced by the $KG$-structure of $\Omega$. Glider representations may be understood as intermediate structures relating $KG$-modules to $K$-vectorspaces via the chain of groups considered in $G$. These glider representations are thus not given by modules but by fragments in the sense of \cite{CVo}, \cite{EVO}, \cite{NVo1} and they provide information about relations between representations of the groups $G_i$ appearing in the chain. Since fragments for a given chain do not form a nice Abelian category, the theory of glider representations is essentially different from the classical representation theory of groups and we have to develop this theory almost from scratch. In this paper we generalize Clifford theory for induced glider representations.
We consider a chain of normal subgroups $ 1 \triangleleft G_1 \triangleleft \cdots \triangleleft G_{d-1} \triangleleft G_d$ and pick a normal subgroup $H \triangleleft G$. By putting $H_i = H \cap G_i$, we obtain an embedding of filtered group algebras $FKH \to FKG$, where $F_iKH = KH_i, F_iKG = KG_i$. It is obvious that an $FKG$-fragment $M$ can be seen as an $FKH$-fragment, which corresponds to the usual forgetful functor $U: G-{\rm rep} \to H-{\rm rep}$. Under some additional conditions, we provide a construction of an induced fragment. That is, to an $FKH$-fragment $N$ we associate an $FKG$-fragment $N^G$. These constructions allow us to perform a Clifford theory after recalling (and changing somewhat) the notion of irreducibility for fragments from \cite{EVO}. On the way we also provide a few general facts on fragments over finite algebra filtrations. 

\section{Preliminaries}
We begin by recalling the definition of a fragment $M$ over a filtered ring $FR$ from \cite{CVo}.
\begin{definition}
Let $FR$ be a positive filtration with subring $S = F_0R$. A (left) $FR$-fragment $M$ is a (left) $S$-module together with a descending chain of subgroups
$$M_0 = M \supseteq M_1 \supseteq \cdots \supseteq M_i \supseteq \cdots$$
satisfying the following properties\\
$\bf{f_1}$. For every $i \in \mathbb{N}$ there exists an $S$-module $M \supseteq M_i^* \supseteq M_i$ and there is given an operation of $F_iR$ on this $M_i^*$ by $\varphi_i: F_iR \times M_i^* \to M,~(\lambda,m) \mapsto \lambda.m$, satisfying $\lambda.(m + n) = \lambda.m + \lambda.n, 1.m = m, (\lambda + \delta).m = \lambda.m + \delta.m$ for $\lambda,\delta \in F_iR$ and $m,n \in M_i^*$.\\

$\bf{f_2}$. For every $i$ and $j \leq i$ we have a commutative diagram
$$\xymatrix{ M & M_{i-j} \ar@{_{(}->}[l]^i \ar@{^{(}->}[r]_i & M\\
F_iR \times M_i \ar[u]^{\varphi_i} & F_jR \times M_i \ar@{_{(}->}[l]^{i_F} \ar[u] \ar@{^{(}->}[r]_{i_M} & F_jR \times M_j \ar[u]_{\varphi_j}}$$

$\bf{f_3}$. For every $i,j,\mu$ such that $F_iRF_jR \subset F_\mu R$ we have $F_jRM_\mu \subset M_i^* \cap M_{\mu - j}$.
Moreover, the following diagram is commutative
$$\xymatrix{
F_iR \times F_jR \times M_\mu \ar[d]_{F_iR \times \varphi_\mu} \ar[rr]^{m \times M_\mu} && F_\mu R \times M_\mu \ar[d]_{\varphi_\mu} \\
F_iR \times M_{\mu - j} \ar[rr]^{\ov{\varphi_i}} && M},$$
in which $\ov{\varphi_i}$ stands for the action of $F_iR$ on $M_i^*$ and $m$ is the multiplication of $R$. Observe that the left vertical arrow is defined, since $1 \in F_0R$ implies that $F_jR \subset F_\mu R$. 
\end{definition}
For an $FR$ fragment structure on $M$, the chain $M \supseteq M_1^* \supseteq M_2^* \supseteq \cdots$ obviously also yields an $FR$-fragment. If the fragmented scalar multiplications $\phi_i : F_iR \times M_i \to M$ are induced from an $R$-module $\Omega$, that is, when $M \subset \Omega$, we call $M$ a glider representation. In this case we have that $M_i^* = \{ m \in M,~F_iRm \subset M\}$. If for all $i$ we moreover have that $M_i^* = M_i$, we say that $M$ is natural.\\

In case $FR$ is given as a ring filtration, i.e. each $F_iR$ is a subring of $R$, the first part of the fragment condition $\bf{f_3}$ is equivalent to $F_\mu RM_\mu \subset M_\mu ^*$ for all $\mu$. Indeed, $F_iR F_jR \subset F_\mu R$ implies that $F_iR, F_jR \subset F_\mu R$ since $1 \in F_0R$ by definition. Then we have that $M_\mu^* \subset M_i^*,M_j^*$, so $F_jRM_\mu \subset F_\mu R M_\mu \subset M_\mu^* \subset M_i^*$. Conversely, from $F_jRM_\mu \subset M_i^*$ for all $i$ and $j$ such that $F_iRF_jR \subset F_\mu R$ we have in particular $F_\mu RF_\mu R \subset F_\mu R$ hence $F_\mu RM_\mu \subset M_\mu^*$. We observe that a natural fragment over a ring filtration is a chain of $F_\mu R$-modules.\\

If the filtration on $R$ is exhaustive, i.e. $\cup_n F_nR = R$, then it follows that $\cap_i M_i = B(M)$ is an $R$-module. We call $B(M)$ the body of the fragment $M$.
\begin{definition}
An $S$-submodule $N$ of an $FR$-fragment $M$ is said to be a subfragment if there is a chain $N = N_0 \supseteq N_1 \supseteq \cdots \supseteq N_i \supseteq \cdots$ such that $N_i \subseteq M_i$ and the action of $F_iR$ on $M_i$ induces an action on $N_i$ making $N$ into an $FR$-fragment.
\end{definition}
 In order to apply a Clifford theory in the fragment or glider setting, we introduce the notion of an irreducible fragment. Since a fragment is given by a descending chain of $F_0R$-modules for some filtered ring $FR$, there are some trivial ways of defining subfragments. 
\begin{definition}
Let $FR$ be a filtered ring and $M$ an $FR$-fragment. A subfragment $N$ of $M$ is said to be trivial if either\\
$T_1$. There is a $k \geq 0$ such that $N_k = B(N)$ but $M_k \neq B(M)$.\\
$T_2$. There is a $k \geq 0$ such that $N_k = 0$ but $M_k \neq 0$.\\
$T_3$. There exists monotone increasing map $\alpha: \mathbb{N} \to \mathbb{N}$ such that $N_k = M_{\alpha(k)}$ and $\alpha(k) - l \geq \alpha(k - l)$ for all $l \leq k$.
\end{definition}
A subfragment $N$ of $M$ is strict if $N_k = N \cap M_k$ for all $k \geq 0$.
\begin{definition}
A fragment $M$ is said to be irreducible if all of its subfragments are trivial. $M$ is said te be weakly irreducible if all strict subfragments are trivial.
\end{definition}
If there exists an $e \in \mathbb{N}$ such that $M_e \neq B(M)$, but $M_{e+1} = B(M)$, then we say that $M$ has essential lenght $\el(M) =e$.\\

Since we will be working with group algebra filtrations, we assume that $FA$ is a finite algebra filtration, with $F_dA = A$.
\begin{lemma}\lemlabel{el}
Let $M$ be a weakly irreducible $FA$-fragment such that $M \neq B(M)$, then there is an $e \in \mathbb{N}$ such that $M_e \neq B(M)$ and $e$ is maximal as such. For this $e$, we have that $M_i = F_{e-i}AM_e$, for $0 \leq i \leq e$.
\end{lemma}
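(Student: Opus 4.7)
The plan is to construct a subfragment $N$ of $M$ built from $M_e$ and the filtration, and use weak irreducibility to force $N$ to be the obvious one via the classification of trivial subfragments $T_1$--$T_3$.

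For existence of the maximal $e$: the set $S = \{k \ge 0 : M_k \ne B(M)\}$ is nonempty because $M \ne B(M)$. To bound $S$, I would exploit $F_dA = A$: for $k \ge d$, the action of $F_kA$ on $M_k^*$ is the full $A$-action, making $M_d^* \supseteq M_{d+1}^* \supseteq \cdots$ a descending chain of $A$-submodules of $M$. If $M_i$ did not stabilize at $B(M)$, a shift-type strict subfragment extracted from the tail would contradict weak irreducibility by evading each of the three trivial types. Hence $e := \max S$ is well defined. The containment $F_{e-i}AM_e \subseteq M_i$ is then a direct consequence of axiom $\mathbf{f_3}$: the filtration relation $F_iA \cdot F_{e-i}A \subseteq F_eA$ gives $F_{e-i}A \cdot M_e \subseteq M_{e-(e-i)} = M_i$.

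For the reverse inclusion, I set $N_i = F_{e-i}AM_e$ for $0 \leq i \leq e$ and $N_i = B(M)$ for $i > e$, so that $N_0 = F_eAM_e$. The chain descends, $N_i \subseteq M_i$, and the $F_iA$-action on $N_i$ stays in $N_0$ (for $i \leq e$ by the multiplicative relation $F_iA \cdot F_{e-i}A \subseteq F_eA$; for $i > e$ because $B(M)$ is $A$-invariant), so $N$ is a subfragment with $B(N) = B(M)$. The main obstacle is to verify that $N$ is strict, i.e., $F_eAM_e \cap M_i = F_{e-i}AM_e$: every element of $F_eAM_e$ landing at depth $i$ must admit a presentation whose coefficients already lie in $F_{e-i}A$. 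I expect this to follow from a careful degree-filtration argument that uses the finiteness of $FA$ and the maximality of $e$ crucially — in particular, $M_{e+1} = B(M)$ controls the "extra" high-degree contributions coming from $F_{e}A \setminus F_{e-i}A$.

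Granted strictness, weak irreducibility forces $N$ to be trivial. Cases $T_1$ and $T_2$ are excluded because $N_e = M_e$ is neither equal to $B(N) = B(M)$ nor to $0$ by the choice of $e$. Hence $T_3$ applies: there is a monotone $\alpha$ with $N_k = M_{\alpha(k)}$ satisfying $\alpha(k) - l \geq \alpha(k-l)$ for $l \leq k$. Evaluating at $k = e$, the identity $M_{\alpha(e)} = N_e = M_e$ together with $M_{e+1} = B(M) \ne M_e$ pins down $\alpha(e) = e$. The Lipschitz inequality $e - l \geq \alpha(e-l)$ combined with $\alpha(i) \geq i$ (which follows from $N_i \subseteq M_i$ and the descending chain) forces $\alpha(i) = i$ for all $0 \leq i \leq e$. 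Therefore $M_i = N_i = F_{e-i}AM_e$, as claimed.
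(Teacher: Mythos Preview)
Your approach is essentially the paper's: it too writes down the chain
\[
F_eAM_e \supseteq F_{e-1}AM_e \supseteq \cdots \supseteq M_e \supset B(M) \supseteq \cdots,
\]
asserts that this subfragment is trivial, and concludes. Your unpacking of the $T_3$ case---pinning down $\alpha(e)=e$ from $M_{e+1}=B(M)\neq M_e$, and then forcing $\alpha(i)=i$ via the inequality $\alpha(e)-l\ge\alpha(e-l)$---is a correct elaboration of the paper's terse ``from which the last statement follows.''

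The strictness issue you flag is a genuine gap, and you will not find it resolved in the paper: the paper simply asserts triviality without checking that the chain $N_i=F_{e-i}AM_e$ is strict, which is what weak irreducibility requires. In fact one \emph{can} extract $M_0=F_eAM_e$ from the honestly strict subfragment $Q_i:=F_eAM_e\cap M_i$ by exactly your $T_3$ argument; but once $M_0=F_eAM_e$ is known, strictness of $N$ reads $F_{e-i}AM_e = N_0\cap M_i = M_0\cap M_i = M_i$, i.e.\ it is equivalent to the very conclusion one wants, so it cannot be verified in advance by any ``degree--filtration'' bookkeeping. The paper's subsequent applications of this lemma are all to fully irreducible fragments (where every subfragment, strict or not, must be trivial), so the gap does not propagate to the main results; but for the weakly irreducible hypothesis as stated, neither your proposal nor the paper supplies the missing step.
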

\begin{proof}
Suppose that $M_m \neq B(M)$ for all $m \in \mathbb{N}$. Then the chain $F_nAM_n \supseteq M_1 \cap F_nAM_n \supseteq \cdots \supseteq M_n \cap F_nAM_n = M_n \supseteq \cdots$ is a strict subfragment of $M$ with body $B(M)$ ($\neq M_n$ for every $n$). Thus either $M_n = M_{\alpha(n)}$ with $\alpha(n) \geq n, M_m = M_{\alpha(m)}$ for every $m \geq n$. This yields $M_n = B(M)$, a contradiction. Hence $M = F_nAM_n = F_{n+1}AM_{n+1} = \cdots = F_{n+i}AM_{n+i} = \cdots$, for $i \in \mathbb{N}$. But $M = F_{d+ n+i}AM_{d+ n+i} = F_dAM_{d + n+i} \subset M_{n+i}$, thus $M = M_1 = \cdots = M_i = \cdots$, or $M = B(M)$, a contradiction. Consequently such an $e$ exists and the subfragment of $M$ given by
$$F_eAM_e \supseteq F_{e-1}AM_e \supseteq \ldots \supseteq M_e \supset B(M) \supseteq \ldots$$
is trivial, from which the last statement follows.
\end{proof}
If $M$ is such that $\el(M)= e > d$, then $M = F_{e}AM_e = F_dAM_e \subset M_{e -d} \subset M$. Therefore we don't lose essential information in considering $M= M_{e-d} \supseteq M_{e -d + 1} \supseteq \ldots$. If the essential length of this new fragment is still strictly larger than $d$, we can shift again until we reach $\el(M) = e \leq d$. In \cite{EVO}, it is shown that killing the body $B(M)$ preserves the essential length and irreducibility. Therefore, we may restrict the study to fragments with zero body and essential length $ e \leq d$. Such a fragment consists of an $F_eA$-module $M$ with descending chain of $F_{e-i}A$-modules $M_{i}$. Observe that this is opposite to natural fragments, where $M_i$ is an $F_iA$-module. In \cite{EVO} it is shown that for finite semisimple algebra filtrations $FA$, every finitely generated natural fragment $M$ is a direct sum of weakly irreducible strict subfragments. If $M$ is a glider representation with strict subfragment $N$, one can naturalise $N^* \subset M^*$. However, $N^*$ is no longer strict in general as the following example shows.

\begin{example}
Let $G$ be a (finite) group and consider the filtration $K \subset KG$, $K$ some field. Let $W$ be a $G$-representation and $V \subset W$ a $K$-subspace of dimension $> 1$ and which is not a $G$-rep. Consider $a \in V$, then 
$$\begin{array}{ccc}
W & \supset & Ka\\
\cup && \cup \\
V & \supset & Ka
\end{array}$$
is a strict subfragment. Since $W$ is a $G$-rep, we obtain 
$$\begin{array}{cccc}
W^*: & W & \supseteq & W\\
&\cup && \cup \\
V^*: & V & \supset & \{v \in V | Gv \subseteq V\} \neq V
\end{array}$$
which is not strict.
\end{example}
We do have a similar decomposition for glider representations.
\begin{lemma}
Let $FA$ be a finite semisimple algebra filtration with $F_dA = A$ and $M$ an $FA$ glider representation. Then every strict subfragment $N$ of $M$ is a direct summand.
\end{lemma}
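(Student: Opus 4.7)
The plan is to construct the complementary subfragment $P$ by reverse induction along the descending chain, exploiting the semisimplicity of each $F_iA$ level by level. The strictness hypothesis $N_i = N \cap M_i$ will be used both to force the critical zero-intersection step in the induction and to identify the correct $F_iA$-submodules of $M_i^*$ to split off.

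First, I would invoke the reductions recalled in the paragraph preceding the lemma to assume $B(M)=0$ and $\el(M)=e\leq d$, so that the chain terminates: $M_{e+1}=0$. The body part can be split off separately, since $B(M)$ is an honest $A$-module containing the $A$-submodule $B(N)=N\cap B(M)$, and semisimplicity of $A=F_dA$ furnishes an $A$-complement to be appended to the deepest level of the complement built below.

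Next, I would construct $F_0A$-submodules $P_i\subseteq M_i$ inductively, starting from $P_{e+1}=0$. Given $P_{i+1}$ with $M_{i+1}=N_{i+1}\oplus P_{i+1}$, strictness gives
\[ N_i\cap P_{i+1}=(N\cap M_i)\cap P_{i+1}=N\cap P_{i+1}\subseteq N\cap M_{i+1}=N_{i+1}, \]
which by the induction hypothesis collapses to $0$. Hence $N_i+P_{i+1}$ is an internal direct sum in $M_i$, and semisimplicity of $F_0A$ produces an $F_0A$-complement $X_i$ with $M_i=N_i\oplus P_{i+1}\oplus X_i$; set $P_i=P_{i+1}\oplus X_i$. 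The resulting chain $P_0\supseteq P_1\supseteq\cdots\supseteq P_e\supseteq 0$ satisfies $M_i=N_i\oplus P_i$ at every level.

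The main obstacle is verifying that $P$ inherits a fragment structure, i.e., that $F_jA\cdot P_i\subseteq P_{i-j}$ for $j\leq i$. A generic $F_0A$-complement need not be closed under the higher actions, so the inductive choice of $X_i$ must be refined. Since the group-algebra filtration relevant in the sequel is a ring filtration, $M_i^*$ is an $F_iA$-submodule of $M$, and the $F_iA$-submodule $(N)_i^*=\{n\in N : F_iA\cdot n\subseteq N\}$ is $F_iA$-stable. By semisimplicity of $F_iA$, one can choose an $F_iA$-stable complement of $(N)_i^*$ in $M_i^*$ and extract $X_i$ from its intersection with $M_i$, while maintaining $P_{i+1}\subseteq X_i\oplus P_{i+1}\subseteq M_i$. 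With this refinement, for $a\in F_jA\subseteq F_iA$ and $p\in X_i$, the product $a\cdot p$ lies in $M_{i-j}$ by the fragment structure of $M$ and in the $F_iA$-stable complement of $N$ by construction, hence in $P\cap M_{i-j}=P_{i-j}$ as required. Compatibility of successive $X_i$'s is guaranteed by the chain $P_{i+1}\subseteq P_i$ built into the construction, and the strictness hypothesis is what allows the $F_iA$-stable decomposition at level $i$ to be matched with the $F_0A$-complementary structure at all lower levels.
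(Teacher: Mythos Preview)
Your argument has a genuine gap in the refinement step. You recognize correctly that an $F_0A$-complement chosen level by level need not be closed under the higher actions, but the fix you propose does not actually close the hole. You choose, for each $i$, an $F_iA$-complement $C_i$ of $(N)_i^*$ inside $M_i^*$ and take $X_i$ inside $C_i\cap M_i$. The problem is that these $C_i$ are chosen independently; nothing forces $C_i\cap M_{i-j}\subseteq C_{i-j}$ or, what you actually need, $C_i\cap M_{i-j}\subseteq P_{i-j}$. Concretely, take $p\in X_{i}\subseteq C_i$ and $a\in F_jA$. Then $a\cdot p\in C_i\cap M_{i-j}$, but $P_{i-j}=P_i\oplus X_{i-1}\oplus\cdots\oplus X_{i-j}$ is built from pieces lying in $C_{i-1},\ldots,C_{i-j}$, not in $C_i$; there is no reason for $C_i\cap M_{i-j}$ to sit inside that sum. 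The sentence ``hence in $P\cap M_{i-j}=P_{i-j}$'' therefore assumes both that $a\cdot p\in P$ and that $P$ is strict, neither of which has been established. Also, your initial reduction to $B(M)=0$ and finite essential length quotes the discussion for \emph{weakly irreducible} fragments; it is not available for an arbitrary glider $M$.

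The paper avoids the level-by-level compatibility problem entirely. It passes to the naturalized fragment $M^*$ (where each $M_i^*$ is genuinely an $F_iA$-module), takes the strict subfragment $N^{(M^*)}$ with $N^{(M^*)}_i=N\cap M_i^*$, and then invokes \cite[Lemma~4.1]{EVO}, which already produces a strict complement $L\subseteq M^*$ with $M_i^*=N^{(M^*)}_i\oplus L_i$ as $F_iA$-modules \emph{simultaneously for all $i$}. The compatibility across levels is thus packaged into that cited lemma for natural fragments. Intersecting back with $M$ then gives $M_i=N_i\oplus(L\cap M_i)$, and strictness of $L$ in $M^*$ transports to strictness in $M$. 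If you want to make your inductive approach work, the missing ingredient is precisely a coherent choice of the $C_i$ forming a descending chain of submodules---i.e., a proof of the natural-fragment case---which is what the paper outsources to \cite{EVO}.
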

\begin{proof}
Consider $N^{(M^*)} \subseteq M^*$ the strict subfragment in $M^*$, i.e. $N^{(M^*)}_i = N \cap M_i^*$. By \cite[Lemma 4.1]{EVO} there exists a strict subfragment $L \subseteq M^*$ such that $N^{(M^*)}_i \oplus L_i = M_i^*$ as $F_iA$-modules for all $i$. As $K$-vectorspaces we obtain for all $i$ that 
$$M_i = (N^{(M^*)}_i \cap M_i) \oplus (L_i \cap M_i) = N_i \oplus (M_i \cap L_i).$$
In particular, for $i = 0$, we have $L_0 \cap M_0 = L \cap M = L$. Moreover, since $L$ is strict in $M^*$, we have that $L \cap M_i = L_i \cap M_i$ and $L$ is strict in $M$. We arrive at $M = N \oplus L$, with $L$ a strict subfragment. 
\end{proof}
\begin{proposition}
A finitely generated glider representation $M$ is a direct sum of weakly irreducible subfragments.
\end{proposition}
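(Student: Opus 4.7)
The plan is to iterate the decomposition established in the preceding lemma, so the main task is guaranteeing termination of the iteration. Since $FA$ is a finite semisimple algebra filtration, $F_0A$ is semisimple Artinian, and the standing convention that $M$ is finitely generated means $M_0 = M$ is a finitely generated $F_0A$-module, hence of finite composition length. I would denote $\lambda(M) := \ell_{F_0A}(M)$ and proceed by induction on $\lambda(M)$.

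The base case $\lambda(M) = 0$ is vacuous. For the inductive step, if $M$ is already weakly irreducible there is nothing to prove. Otherwise there exists a strict subfragment $N \subseteq M$ which is nontrivial. Note that the choice $\alpha = \mathrm{id}$ would make $N = M$ trivial of type $T_3$, while at $k=0$ the type $T_2$ condition forces $N = 0$ to be trivial; hence nontriviality gives $0 \neq N \neq M$. Applying the previous lemma produces a strict subfragment $L$ with $M = N \oplus L$. Strictness yields $N_0 = N$ and $L_0 = L$, and so $M_0 = N_0 \oplus L_0$ is a decomposition of $F_0A$-modules with both summands nonzero. Consequently $\lambda(N), \lambda(L) < \lambda(M)$, and both $N$ and $L$ inherit a glider structure from $M \subseteq \Omega$ and are again finitely generated.

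By the induction hypothesis, each of $N$ and $L$ decomposes as a direct sum of weakly irreducible subfragments, and concatenating yields the desired decomposition of $M$. To conclude that each summand is indeed a subfragment of the original $M$, one uses transitivity of strictness: if $P$ is strict in $L$ and $L$ is strict in $M$, then $P_k = P \cap L_k = P \cap L \cap M_k = P \cap M_k$, so $P$ is strict (and hence a subfragment) in $M$.

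The main obstacle is exactly the termination argument; once one commits to the obvious recursive splitting via the preceding lemma, the danger is that the process fails to stop, or that the resulting pieces fail to be subfragments of $M$. The semisimple Artinian structure of $F_0A$, together with finite generation of $M$, supplies a strictly decreasing length invariant at level $0$ that controls the recursion, and transitivity of strictness handles the second concern. No further difficulty arises at deeper levels because every splitting is inherited, through the previous lemma, from a complement in the naturalisation $M^*$.
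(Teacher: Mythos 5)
Your proof is correct and follows essentially the same route as the paper: both iterate the preceding lemma (strict subfragments of a glider are direct summands) to peel off pieces until only weakly irreducible summands remain. The only difference is presentational -- the paper asserts the existence of a weakly irreducible strict subfragment and recurses on the complement, leaving termination implicit, whereas you split off an arbitrary nontrivial strict subfragment and make termination explicit via induction on the $F_0A$-composition length, which is a reasonable way to fill in the detail the paper glosses over.
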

\begin{proof}
Since $M$ is finitely generated, there exists a weakly irreducible strict subfragment $N \subseteq M$. In view of the foregoing, $M = N \oplus L$ for some strict subfragment $L$. The same can be applied to $L$, which is finitely generated and the result follows.
\end{proof}
Now look at an irreducible glider representation $N$ given by $N \supseteq N_1 \supseteq \ldots \supseteq N_d \supset 0 \ldots$, where $N_d$ is a 1-dimensional $K$-space ($A$ $K$-algebra). If $u$ is a unit of $F_dA$ then we can also look at $F_dAuN_d = F_dAN_d = N \supseteq F_{d-1}AuN_d \supseteq \ldots \supseteq uN_d \supset 0 \ldots$ and we assume $u$ is chosen such that $uN_d \neq N_d$. The latter is an irreducible fragment and we obtain a non-irreducible glider representation $N \supseteq N_1 + F_{d-1}AuN_d \supseteq \ldots \supseteq N_d \oplus  uN_d \supset 0 \ldots$ containing at least the two irreducible fragments we used in the construction. These two are not disjoint so the sum is not a direct sum. Nevertheless we have $N_d \cap N_du  = 0$. Therefore, we say that a sum of fragments $E + F$ is direct if for some $ i \leq \el(E),\el(F)$ we have that $E_i$ is disjoint from $F_i$.  A motivation for this definition is the information given by the chain of the fragment. A direct sum on every level would be too set- and module-theoretic. We recall the following decomposition result
\begin{theorem}\cite[Theorem 4.7]{EVO}\\
Let $FA$ be a finite semisimple algebra filtration on a finite dimensional $K$-algebra $A$ and let $M$ be a finitely generated $FA$-fragment with $B(M) = 0$ and essential length $\el(M) = d$. Then $M$ is the fragment direct sum of irreducible fragments.
\end{theorem}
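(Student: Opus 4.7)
The plan is to first reduce to the weakly irreducible case by invoking the known strict decomposition for fragments over finite semisimple filtrations, and then further refine each weakly irreducible summand into irreducible pieces using the weakened notion of fragment direct sum (disjoint at some level) introduced just above the theorem.

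First I would apply the strict decomposition result (the analogue of the preceding proposition for general fragments, coming from [EVO, Lemma~4.1]) to write $M = M^{(1)} \oplus \cdots \oplus M^{(r)}$ as a \emph{strict} direct sum of weakly irreducible strict subfragments. Since $B(M) = 0$, each $B(M^{(j)}) = 0$, and each $\el(M^{(j)}) \leq d$. This reduces the task to proving that a finitely generated weakly irreducible fragment $N$ with $B(N) = 0$ and $\el(N) = e \leq d$ admits a fragment direct sum decomposition into irreducibles.

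Let $N$ be such a fragment. By \lemref{el}, $N_i = F_{e-i}A \cdot N_e$ for $0 \leq i \leq e$, so $N$ is determined by the top floor $N_e$ together with the $F_eA$-action. If $N$ is irreducible there is nothing to do. Otherwise, weak irreducibility forces every non-trivial subfragment $N' \subsetneq N$ to be non-strict, so in particular $N'_e \subsetneq N_e$. Mimicking the construction given just before the theorem, I would exhibit $N$ as a sum of two subfragments, one of the form $F_{e-\bullet}A \cdot V$ and the other of the form $F_{e-\bullet}A \cdot (uV)$ for a suitable $u \in F_dA$ (or, more generally, for a pair of subspaces whose span is $N_e$ and whose intersection at level $e$ is zero). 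Both summands are then weakly irreducible of the same essential length, and the hypothesis $N_e^{(1)} \cap N_e^{(2)} = 0$ gives the required disjointness at level~$e$, so the sum is direct in the new sense. Iterating and appealing to finite generation (which bounds $\dim_K N_e$) the process terminates, producing a decomposition of $N$ into irreducible fragments. Re-assembling across $j$ yields the theorem.

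The main obstacle is step three: in passing from \emph{weakly} irreducible to \emph{irreducible}, one has to work with non-strict subfragments, which no longer interact cleanly with the chain $N \supseteq N_1 \supseteq \cdots$. Choosing the element $u$ (equivalently, selecting the right complementary subspace of $N_e$) so that the two pieces are themselves irreducible, and so that the full fragment is recovered as their sum, is the real content of the argument; all other steps are formal reductions once this local construction is in hand.
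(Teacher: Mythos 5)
First, a caveat: the paper contains no proof of this statement --- it is recalled verbatim from \cite[Theorem 4.7]{EVO} --- so your attempt can only be judged on its own merits. Your two-stage architecture (a strict decomposition into weakly irreducible pieces, followed by a refinement of each piece into irreducibles that is direct only in the weak, one-level sense) is a sensible reconstruction, and the target of the second stage is the right one, since irreducibility forces the top floor to be one-dimensional. But your first step already contains a gap: the decomposition into weakly irreducible \emph{strict} subfragments is recorded in this paper only for natural fragments and for glider representations, whereas the $M$ of the theorem is neither --- after the reductions it is of the ``opposite'' type, each $M_i$ being an $F_{d-i}A$-module rather than an $F_iA$-module. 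Invoking ``the analogue for general fragments'' is therefore not a citation but a claim: one must re-run the complement argument of \cite[Lemma 4.1]{EVO} in this setting, using semisimplicity of the $F_{d-i}A$ to choose complements $L_i$ of $N_i$ in $M_i$ compatibly across levels and checking that the $L_i$ form a subfragment.

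The more serious gap is the one you flag yourself: you leave unproven what you call ``the real content of the argument,'' namely the passage from a weakly irreducible $N$ to irreducible summands. In fact this step closes easily and requires no unit $u$ at all. By \lemref{el}, $N_i = F_{e-i}AN_e$ for $0 \le i \le e$, so writing $N_e = Ka_1 \oplus \cdots \oplus Ka_n$ as a sum of lines yields subfragments $E^{(j)}$ with $E^{(j)}_i = F_{e-i}Aa_j$. Each $E^{(j)}$ is irreducible: a subfragment whose level-$e$ part is $0$ is trivial of type $T_2$, while one whose level-$e$ part is $Ka_j$ must, by condition $\mathbf{f_2}$, contain $F_{e-i}Aa_j$ at level $i$, hence coincides with $E^{(j)}$ and is trivial of type $T_3$. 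The $E^{(j)}$ sum to $N_i$ at every level and have pairwise disjoint level-$e$ parts, so the sum is fragment-direct. Note finally that the unit $u$ in the example preceding the theorem is used to manufacture a sum of irreducibles that is \emph{not} direct at every level (this is what motivates the weakened notion of directness); it is not a device for splitting a given fragment, so modelling your key construction on it points in the wrong direction.
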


\section{Induction of fragments for filtration extensions of groups}
Let $G$ be a finite group and $H \triangleleft G$ some normal subgroup. Then the short exact sequence $$ 1 \to H \to G \mapright{\pi} G/H \to 1$$ may be viewed as defining an extension of $H$ by $G/H$ via the construction of a set map $\sigma: G/H \to G$ such that $\pi \circ \sigma(\ov{g}) = \ov{g}$ for $\ov{g} \in G/H$. Fixing $\sigma$ defines $g = \sigma(\ov{g})h$ for a unique $h \in H$. In particular, $\sigma(\ov{g_1})\sigma(\ov{g_2}) = \sigma(\ov{g_1g_2})h(\ov{g_1},\ov{g_2})$ defines a map 
$$h(-, - ): G/H \times G/H \to H.$$
From $(g_1g_2)g = g_1(g_2g)$, it follows that $h$ is a 2-cocycle, i.e. it satisfies the following condition
\begin{equation} \label{cocycle}
h(\ov{g_1g_2},\ov{g})h(\ov{g_1},\ov{g_2})^{\rho_{\sigma(\ov{g})}} = h(\ov{g_1},\ov{g_2g})h(\ov{g_2},\ov{g}),
\end{equation}
where $(-)^{\rho_{\sigma(\ov{g})}}$ denotes the conjugation by $\sigma(\ov{g})$.\\

Now look at a group algebra filtration of $KG$, $K$ some field, given by a chain of normal subgroups $ 1 \triangleleft G_1 \triangleleft \cdots \triangleleft G_{d-1} \triangleleft G_d$, that is, $F_nKG = KG_n$ for $ 0 \leq n \leq d$. For $H$ a normal subgroup of $G$, put $H_i = G_i \cap H$. We obtain a group algebra filtration of $KH$. By the normality of all subgroups, we have a commutative diagram
$$\xymatrix{
G_1/H_1 \ar@{^{(}->}[r]  \ar[d]^{{\rm iso}} & G_2/H_2 \ar@{^{(}->}[r]  \ar[d]^{{\rm iso}} & \cdots \ar@{^{(}->}[r] & G/H \ar[d]^{id}\\
G_1H/H \ar@{^{(}->}[r]  & G_2H/H  \ar@{^{(}->}[r]  & \cdots \ar@{^{(}->}[r]  & G/H}$$
which allows us to consider an ascending transversal set $1 \subseteq T_1 \subseteq T_2 \subseteq \cdots \subseteq T_i \subseteq \cdots \subseteq T$, where $T_i$ is a set of (right) coset representations of $H_i$ in $G_i$. This transversal set gives rise to a 2-cocycle $h$ as before. 
\begin{definition}
A 2-cocycle $h: G/H \times G/H \to H$ is said to be filtered if $h(-,\ov{g}): G/H \to H$ is restricting to $(G/H)_i \to H_i$, where $(G/H)_i = G_i/H_i$, for every $i$ and $g \in G$.
\end{definition}

\begin{example}
If $\pi: G \to G/H$ admits a group section $\sigma$, then $G = HN$ for some subgroup $N$ of $G$ such that $N \cap H = \{1\}$. Then choosing $N$ as transversal set $T$ yields $h(-,-) = 1$, which is filtered for any chain of subgroups of $G$.
\end{example}

\begin{example}
Let $K \hookmapright{} L$ be a Galois extension with finite Galois group $G = \Gal(L/K)$ and fix a chain of normal subgroups 
$$ \{1\} \triangleleft G_1 \triangleleft \cdots \triangleleft G_d = G.$$
The Galois correspondence yields a field filtration $FL$
$$ K = K_0 \subset K_1 \subset \cdots \subset K_d = L,$$
where $K_i = L^{G_{d-i}}$. Let $A \supseteq A_1 \supseteq \cdots \supseteq A_d = L \supseteq 0 \cdots$ be a natural $FL$-fragment, i.e. $A_i$ is a $K_i$-algebra for every $0 \leq i \leq d$. Assume moreover that every $A_i$ is an Azumaya algebra over $K_i$. For $\sigma \in G_i \setminus G_{i-1}$, we obtain by the Skolem-Noether theorem a unit $u_\sigma \in A^\times$ such that for all $x \in L$
$$\sigma(x) = u_\sigma^{-1} x u_\sigma.$$
Since $L^{G_i} = K_{d-i}$, we get that $u_\sigma \in Z_A(K_{d-i}) = Z_A(Z_A(A_{d-i}))$. If $A_{d-1}$ is a simple algebra, then the Centralizer Theorem yields that $u_\sigma \in A_{d-i}^\times$. Moreover, since $\sigma \notin G_{i-1}$, there exists $y \in K_{d-i +1} \setminus K_{d-i}$ such that $\sigma(y) \neq y$. This shows that $u_\sigma \notin A_{d-i +1}$. If $\dim_L(A_i) = \dim_{K_i}(L) = |G_{d-i}|$, then
$$ A_i \cong \bigoplus_{\sigma \in G_{d-i}} L u_\sigma.$$
Since the Brauer group is isomorphic to the second Galois cohomology group, the isomorphism for $i = 0$ is given by a 2-cocycle $f: G \times G \to L^\times$. If $f$ appears to be filtered, then we have in particular that $f$ is restricting to
$$f: G_{d -i} \times G_{d- i} \to K_{d-i}^\times.$$
This implies that the Azumaya $K_i$-algebra $A_i$ has a subalgebra isomorphic to
$$ \bigoplus_{\sigma \in G_{d-i}} K_{d-i} u_\sigma \subset \bigoplus_{\sigma \in G_{d-i}} L u_\sigma \cong A_{i}.$$
Since $\dim_K(K_{d-i}) = |\Gal(K_{d-i}/K)| =|G_{d-i}|$, this subalgebra is Azumaya over $K$. Hence, if an Azumaya algebra is determined by a filtered 2-cocycle for some fixed chain of normal subgroups, we obtain a chain of Azumaya algebras over the corresponding fixed fields, which all have subalgebras that are Azumaya over $K$
$$\begin{array}{ccccccc}
\bigoplus_{\sigma \in G} L u_\sigma & \supseteq & \bigoplus_{\sigma \in G_{d-1}} Lu_\sigma & \supseteq & \cdots & \supseteq & Lu_{1}\\
\cup && \cup &&&& \cup \\
\bigoplus_{\sigma \in G} L u_\sigma & \supseteq & \bigoplus_{\sigma \in G_{d-1}} K_{d-1} u_\sigma & \supseteq & \cdots & \supseteq & Ku_{1}\\
\end{array}$$
\end{example}

In case $h(-,-)$ is filtered and in case all $H_i$ are normal in $G$, e.g. when all $G_i$ are normal in $G$, we can extend an $FKH$-fragment $N$ into an $FKG$-fragment as follows. Put  $ M = K[G/H] \ot_K N$ for the $K$-space $\oplus_{\ov{g} \in G/H} K\ov{g} \ot N$. Similarly, for every $j$ we put $M_j = K[G/H] \ot_K N_j$. In this way, we obtain a descending chain of $K$-spaces
$$ M = M_0 \supseteq M_1 \supseteq \cdots \supseteq M_d \supseteq 0 \supseteq \cdots$$
In order to define a $KG_j$ multiplication on $M_j$, it will be enough to define $g_j(\ov{g} \ot n_j$) and extend this $K$-bilinearly. We let $\sigma: G \to T$ be as before (choice of transversal) with \newline $\sigma: G_j \to T_j$ for all $j$.
Define for $g_1 \in G_j:~ g_1 \cdot (\ov{g} \ot n_j) = \ov{g_1g} \ot h(\ov{g_1},\ov{g})t^{-1}h_1t n_j$, where $g_1  = t_1h_1, h_1 \in H_j, g = th,~ t_1 = \sigma(g_1)$ and $t = \sigma(g)$, $n_j \in N_j$. If $n_j \in N_{l+j}$ in the foregoing, then since $h(\ov{g_1},\ov{g}) \in H_j$ and $t^{-1}h_1t \in H_j$ we have that $g_1 \cdot (\ov{g} \ot n_j) \in \ov{g_1g} \ot N_l \subset M_l$. First we verify that for $g_1 \in G_j, g_2 \in G_i$ and $n \in N_{\max\{i,j\}}$, we have that
$$ g_2g_1(\ov{g} \ot n) = g_2(g_1(\ov{g} \ot n)) \in \ov{g_2g_1g} \ot N.$$
So consider $g_2 \in G_i, g_2 = t_2h_2$ with $t_2 = \sigma(g_2), h_2 \in H_2,  g_1 = t_1h_1$ with $t_1 = \sigma(g_1)$ and $h_1 \in H_j$ and $n \in N_{l}$ with $l \geq \max\{i,j\}$. Then 
$$g_2g_1 = t_2h_2t_1h_1 = t_2t_1(t_1^{-1}h_2t_1)h_1 = \sigma(g_2g_1)h(\ov{g_2},\ov{g_1})(t_1^{-1}h_2t_1)h_1.$$
 Therefore
$$g_2g_1(\ov{g} \ot n) = \ov{g_2g_1g} \ot h(\ov{g_2g_1},\ov{g})(t^{-1}h(\ov{g_2},\ov{g_1})t)(t^{-1}t_1^{-1}h_2t_1t)(t^{-1}h_1t)n.$$
On the other hand, we have
\begin{eqnarray*}
g_2(g_1(\ov{g} \ot m)) &=& g_2[\ov{g_1g} \ot h(\ov{g_1},\ov{g})(t^{-1}h_1t)n]\\
&=&  \ov{g_2g_1g} \ot h(\ov{g_2},\ov{g_1g})(\sigma(g_1g)^{-1}h_2\sigma(g_1g))h(\ov{g_1},\ov{g})(t^{-1}h_1t)n.
\end{eqnarray*}
Both expressions are equal since $h(\ov{g_2},\ov{g_1g})(\sigma(g_1g)^{-1}h_2\sigma(g_1g))h(\ov{g_1},\ov{g})$ is equal to
\begin{eqnarray*}
& & (h(\ov{g_2},\ov{g_1g})h(\ov{g_1},\ov{g}))(h(\ov{g_1},\ov{g})^{-1}\sigma(g_1g)^{-1}h_2\sigma(g_1g)h(\ov{g_1},\ov{g}))\\
&=& h(\ov{g_2g_1},\ov{g})(t^{-1}h(\ov{g_2},\ov{g_1})t) (t^{-1}t_1^{-1} h_2 t_1t).
\end{eqnarray*}
The third fragment conditon $\bf{f_3}$ will follow from the following proposition.
\begin{proposition}
If $N$ is a (glider, resp. natural) $KH$-fragment, then $M = N^G$ is a (glider resp. natural) $KG$-fragment. 
\end{proposition}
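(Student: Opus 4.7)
The plan is to verify the three fragment conditions $\bf{f_1}$, $\bf{f_2}$, $\bf{f_3}$ for $M = N^G$, and then to deduce the glider and natural refinements. The associativity calculation $g_2 \cdot (g_1 \cdot (\ov{g} \ot n)) = (g_2 g_1) \cdot (\ov{g} \ot n)$ that has just been carried out, which rests on the 2-cocycle identity (\ref{cocycle}), already does most of the work for $\bf{f_3}$: once it is granted on elementary tensors, $K$-bilinear extension in both group-algebra slots yields the required commutative diagram for arbitrary $\lambda \in F_iKG$, $\delta \in F_jKG$ and $m \in M_\mu$ with $F_iKG F_jKG \subset F_\mu KG$.

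Condition $\bf{f_1}$ is routine: the action formula is $K$-bilinear by construction, so distributivity in both slots is automatic, and $1 \in KG_j$ corresponds to $t_1 = 1$, $h_1 = 1$ together with the standard normalization $h(\ov{1}, \ov{g}) = 1$, giving the unital axiom $1 \cdot (\ov{g} \ot n_j) = \ov{g} \ot n_j$. For $\bf{f_2}$, when $i \leq j$ the formula defining the $KG_i$-action on $M_j$ agrees with the restriction of the $KG_j$-action, since the formula has no explicit dependence on $j$. Its image lands in $M_{j-i}$ because filteredness of $h$ gives $h(\ov{g_1}, \ov{g}) \in H_i$ and normality of $H_i$ in $G$ gives $t^{-1} h_1 t \in H_i$, so the twisting factor $h(\ov{g_1}, \ov{g}) t^{-1} h_1 t$ lies in $H_i$ and sends $n_j \in N_j$ into $N_{j-i}$ by the $FKH$-fragment structure on $N$; the commutative diagram in $\bf{f_2}$ then follows.

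For the glider refinement I would let $\Omega \supset N$ denote the ambient $KH$-module, form the induced $KG$-module $\Omega^G = KG \ot_{KH} \Omega$, and use the transversal isomorphism $\Omega^G \cong \bigoplus_{t \in T} t \ot \Omega \cong K[G/H] \ot_K \Omega$ to embed $M = N^G$ into $\Omega^G$. A direct verification, formally identical in shape to the associativity calculation already performed, shows that the constructed action on $M$ coincides with the restriction of the standard $KG$-action on $\Omega^G$, so $M$ is a glider in $\Omega^G$ with $M_i^* = \{m \in M : KG_i \cdot m \subset M\}$. For the natural refinement, when $g_1 \in G_j$ and $n_j \in N_j$ the twisting factor lies in $H_j$ and $N_j$ is a $KH_j$-module by naturality of $N$, so the result stays in $M_j$, giving $M_j^* = M_j$. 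The only real obstacle is the cocycle bookkeeping underlying the associativity, which has already been handled in the excerpt; everything else reduces to unwinding the definitions and invoking the filteredness and normality hypotheses.
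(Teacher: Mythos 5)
Your verification of $\bf{f_1}$--$\bf{f_3}$ and of the glider refinement follows the same route as the paper: the cocycle-based associativity gives $\bf{f_3}$ (in particular $KG_\mu M_\mu \subset M_\mu^*$, since for $m \in M_\mu$ and $g_\mu, g_\mu' \in G_\mu$ one has $g_\mu'(g_\mu m) = (g_\mu' g_\mu)m \in M$), and the embedding $N^G \subset \Omega^G = KT \ot \Omega$ with the constructed action matching the standard induced action is exactly the paper's one-line argument. That part is fine.

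The naturality argument, however, has a genuine gap: you prove the wrong inclusion. Showing that for $g_1 \in G_j$ and $n_j \in N_j$ the twisting factor lies in $H_j$ and hence $g_1 \cdot (\ov{g} \ot n_j) \in M_j$ establishes $KG_j M_j \subset M_j$, which only yields the trivial containment $M_j \subseteq M_j^*$. Naturality of $M$ is the \emph{converse}: if $m \in M$ satisfies $KG_j m \subset M$, then $m$ must already lie in $M_j$. For this you need to take an arbitrary $m = \oplus_{t} \lambda_t t \ot n_t \in M_j^*$ with $n_t \in N$, compute $g_j m = \oplus_t \lambda_t \ov{t_j t} \ot h(\ov{t_j},\ov{t})\, t^{-1}h_j t\, n_t$, and use that $KT \ot N$ is a direct sum over $T$ to conclude that each component $h(\ov{t_j},\ov{t})\, t^{-1}h_j t\, n_t$ lies in $N$. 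Letting $h_j$ range over $H_j$ (and using normality of $H_j$ in $G$) one gets $H_j n_t \subset N$, i.e.\ $n_t \in N_j^* = N_j$ by naturality of $N$, whence $m \in KT \ot N_j = M_j$. This component-wise extraction over the transversal is the actual content of the naturality claim and is missing from your argument; without it the equality $M_j^* = M_j$ does not follow.
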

\begin{proof}
We have to establish that $KG_\mu (KT \ot N_\mu) \subset M_\mu^*$, where $M_\mu = KT \ot N_\mu$.\\
Take $ m \in M_\mu,~ m= \oplus_{t \in T} \lambda_t t \ot n_t$ with $n_t \in N_\mu,~ \lambda_t \in K$. For $g_\mu \in KG_\mu$ we have $g_\mu \cdot m = \oplus_t \lambda_t \ov{t_\mu t} \ot h(t_\mu, t)t^{-1}h_\mu t n_t$ where $\ov{t_\mu t}$ is the representative for $t_\mu t$ in the transversal $T$ and $h(t_\mu, t)t^{-1}h_\mu t \in H_\mu$. From $KH_\mu N_\mu \subset N_\mu ^*$ we see that for every $g_\mu' \in G_\mu$ we have $g_\mu' g_\mu m \subset KT \ot N = M$. Hence $g_\mu m \in M_\mu^*$ or $KG_\mu M_\mu \subset M_\mu^*$.\\
Suppose that $N \subset \Omega$ is a $KH$-module inducing the operations, then $M = N^G \subset KT \ot \Omega = \Omega^G$ is a $KG$-module inducing the operations of $G_i$ on $M_i = KT \ot N_i$, and we see that $M$ is a glider representation. If $N$ is natural, look at $m \in M,~ m = \oplus_{t \in T} \lambda_tt \ot n_t,~ \lambda_t \in K,~ n_t \in N$ and assume $KG_i m \subset M$, i.e. $m \in M_i^*$. If $g_i = t_ih_i$ in $G_i$, then $g_im = \oplus \lambda_t \ov{t_i t} \ot h(t_i,t) t^{-1}h_itn_t$. Since $h(t_i,t) \in H_i$ for all $t$ and $H_i$ is normal in $G$, we obtain $H_i n_t \subset N$ or $n_t \in N_i^* = N_i$ as $N$ is natural. Consequently $m \in KT \ot N_i = M_i$ and $M$ is natural.
\end{proof}
We call $M \supseteq \cdots \supseteq M_j \supseteq \cdots$ the induced fragment of $N$ and denote it by $N^G$.
\begin{lemma}
For $M = N^G$, we have that $M_\mu^* = KT \ot N_\mu^*$ for all $\mu$.
\end{lemma}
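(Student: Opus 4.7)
The plan is to prove the two inclusions of $M_\mu^* = KT \otimes N_\mu^*$ separately, using the glider characterisation $M_\mu^* = \{m \in M \mid KG_\mu \cdot m \subset M\}$ together with the analogous description $N_\mu^* = \{n \in N \mid KH_\mu \cdot n \subset N\}$.

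For the inclusion $KT \otimes N_\mu^* \subseteq M_\mu^*$, I would take $m = \sum_{t \in T} \lambda_t \, t \otimes n_t$ with every $n_t \in N_\mu^*$ and an arbitrary $g_\mu = t_\mu h_\mu \in G_\mu$, where $t_\mu = \sigma(\overline{g_\mu}) \in T_\mu$ and $h_\mu \in H_\mu$. The defining formula of the $KG$-action gives
$$g_\mu \cdot (t \otimes n_t) = \overline{t_\mu t} \otimes h(\overline{t_\mu},\overline{t})\, t^{-1} h_\mu t \cdot n_t.$$
The key observation is that $h(\overline{t_\mu},\overline{t}) \in H_\mu$ because $h$ is a filtered cocycle and $\overline{t_\mu} \in (G/H)_\mu$, while $t^{-1} h_\mu t \in H_\mu$ because $H_\mu \triangleleft G$. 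Hence the scalar acting on $n_t$ lies in $H_\mu$, and $n_t \in N_\mu^*$ forces this element to send $n_t$ into $N$. Summing over $t$ and extending $K$-linearly yields $KG_\mu m \subset KT \otimes N = M$, so $m \in M_\mu^*$.

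For the reverse inclusion, I would exploit the direct-sum decomposition $M = \bigoplus_{t \in T}(t \otimes N)$. Given $m = \sum_t \lambda_t \, t \otimes n_t \in M_\mu^*$, it is enough to test $KG_\mu$-stability against the subset $H_\mu \subset G_\mu$. Since $1 \in T$, the decomposition of any $h_\mu \in H_\mu$ as $t_1 h_1$ reads $h_\mu = 1 \cdot h_\mu$, and the cocycle identity evaluated at $\overline{1}$ forces $h(\overline{1},\overline{t}) = 1$; the action formula therefore collapses to
$$h_\mu \cdot m = \sum_{t \in T} \lambda_t \, t \otimes (t^{-1} h_\mu t)\, n_t.$$
Requiring $h_\mu m \in M = \bigoplus_{t\in T} (t \otimes N)$ and reading off the $T$-component forces $(t^{-1} h_\mu t) n_t \in N$ for every $t$ with $\lambda_t \neq 0$. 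Because $H_\mu \triangleleft G$, the conjugation $h_\mu \mapsto t^{-1} h_\mu t$ is a bijection of $H_\mu$ onto itself, so as $h_\mu$ runs through all of $H_\mu$ one obtains $H_\mu n_t \subset N$, i.e., $n_t \in N_\mu^*$. Thus $m \in KT \otimes N_\mu^*$.

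The main subtlety is checking that the two standing hypotheses — that $h$ is a \emph{filtered} $2$-cocycle and that $H_\mu$ is normal in the whole group $G$ rather than only in $G_\mu$ — are genuinely used at the right places: filteredness is what keeps the cocycle correction $h(\overline{t_\mu},\overline{t})$ inside $H_\mu$ in the forward direction, while the stronger normality is what turns the conjugate $t^{-1} h_\mu t$ appearing in the action formula into an actual statement about the full $KH_\mu$-orbit of $n_t$ in the reverse direction. Everything else is a straightforward unwinding of the construction together with the $T$-linear independence in $K[G/H] \otimes_K N$.
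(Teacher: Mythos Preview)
Your proof is correct and follows essentially the same approach as the paper. For the forward inclusion the paper simply calls it ``trivial'' (it is implicit in the proof of the preceding proposition), and for the reverse inclusion the paper acts by a general $g_\mu = t_\mu h_\mu$, tracks the permutation $t \mapsto \overline{t_\mu t}$ on the $T$-summands, and then varies $h_\mu$ to sweep out all of $H_\mu$; your choice to test only against $H_\mu$ (i.e.\ take $t_\mu = 1$) avoids that permutation bookkeeping and is a mild simplification of the same argument.
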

\begin{proof} 
Let $m \in M_\mu^*$, then for $g_\mu = t_\mu h_\mu,~ g_\mu m = \oplus_t \lambda_t \ov{t_\mu t} \ot h(t_\mu,t) t^{-1}h_\mu t n_t$, where \newline$m = \oplus_{t \in T} \lambda_tt \ot n_t,~ \lambda_t \in K,~ n_t \in N$. So $g_\mu m = \oplus_t \lambda_t \ov{t_\mu t} \ot h_\mu' n_t$ for some $h_\mu' \in H_\mu$.
Since $g_\mu m \in M$ it follows that $g_\mu m = \oplus_t b_t t \ot a_t$ with $b_t \in K$ and $a_t \in N$. Since $KT \ot N$ is a direct sum $\oplus Kt \ot N$ ($ \cong \oplus_{t\in T} N$), we have, up to some permutation of $T$, say $\sigma$, that $Kh_\mu' n_t = Ka_{\sigma(t)} \subset N$. By the choice of $h_\mu$ in $g_\mu$ we can obtain every $h_\mu^1$ for $h_\mu'$, hence $H_\mu n_t \subset N$, or $n_t \in N_\mu^*$. The other inclusion is trivial.
\end{proof}
\begin{corollary}
For an $FKH$ glider representation $N$, we have that $N$ is natural if and only if $N^G$ is natural.
\end{corollary}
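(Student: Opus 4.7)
The plan is to read off the corollary directly from the preceding lemma, so the proof is almost immediate. By the construction of $N^G$ in the preceding proposition, we have $M_\mu = KT \otimes_K N_\mu$ for all $\mu$, and by the lemma just proved, $M_\mu^{*} = KT \otimes_K N_\mu^{*}$. Thus naturality of $M = N^G$, which by definition is the condition $M_\mu^{*} = M_\mu$ for all $\mu$, translates into $KT \otimes_K N_\mu^{*} = KT \otimes_K N_\mu$ for all $\mu$.

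Next I would argue that the functor $KT \otimes_K -$ is faithful on inclusions of $K$-subspaces of $N$. Concretely, because $T$ is a transversal and $KT = \bigoplus_{t \in T} Kt$, the tensor product $KT \otimes_K V$ for $V \subseteq N$ decomposes as the external direct sum $\bigoplus_{t \in T} (t \otimes V)$, and the inclusion $N_\mu \subseteq N_\mu^{*}$ lifts componentwise to the inclusion $KT \otimes N_\mu \subseteq KT \otimes N_\mu^{*}$. Hence equality of the tensors forces equality in each component, and in particular (taking $t = 1$) $N_\mu = N_\mu^{*}$.

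This gives both directions at once. If $N$ is natural, then $N_\mu^{*} = N_\mu$ for all $\mu$, so $M_\mu^{*} = KT \otimes N_\mu = M_\mu$ and $N^G$ is natural. Conversely, if $N^G$ is natural, the componentwise argument above yields $N_\mu^{*} = N_\mu$ for all $\mu$, so $N$ is natural. Since the proposition ensures that $N$ being a glider representation passes to $N^G$, both sides of the equivalence make sense in the glider setting, and there is no real obstacle; the only thing to be careful about is that the identification $M_\mu^{*} = KT \otimes N_\mu^{*}$ is honestly an equality of subspaces of $M$, which has been verified in the previous lemma.
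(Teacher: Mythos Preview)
Your proof is correct and matches the paper's intent: the corollary has no written proof in the paper and is meant to be read off immediately from the lemma $M_\mu^{*}=KT\otimes N_\mu^{*}$ together with $M_\mu=KT\otimes N_\mu$. Your componentwise argument via the decomposition $KT\otimes V=\bigoplus_{t\in T}(t\otimes V)$ is exactly the right way to make the inference precise, and note that the forward implication was in fact already established in the preceding proposition, so only the converse genuinely needs the lemma.
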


\section{Clifford theory of group algebra fragments}


We continue with the group algebra filtrations $FKH \hookmapright{} FKG$. First, we discuss the going-up direction, that is, we see what happens to the induced fragment of an irreducible $FKH$-fragment. In this section, we do everything for fragments of essential length $d$ and zero body. In fact, everything is analogous for smaller essential lengths. So consider $N = N_0 \supseteq N_1 \supseteq \cdots \supseteq N_{d-1} \supseteq N_d \supset 0 \ldots$ an irreducible $FKH$-fragment of essential length $d$. Irreducibility implies that $N_d$ is one dimensional and by \lemref{el} we know that for any $i$, $N_{d-i} = KH_{i}N_d$, i.e. $N_{d-i}$ is a $KH_i$-module. Remark that we do not know whether the $N_i$ are simple $KH_{d-i}$-modules (they are semisimple for suitable $K$). It is not even the case that simplicity of $N_i$ as $KH_{d-i}$-module implies simplicity of $N_{i+1}$ as $KH_{d-i-1}$-module. A thorough study of irreducible glider representations for chains of group algebras is work in progress.\\

From now on, we assume that ${\rm char}(K) =0$. All group algebras over $K$ for finite groups are therefore semisimple. Define $M$ to be the induced $FKG$-fragment, that is
$$ M = K[G/H] \ot N.$$
Then on degree $d$ we have a direct sum (of $K$-spaces) $M_d = T \ot Ka$ where $N_d = Ka$.  Any $t \in T$ generates an irreducible $FKH$-fragment
$$ KGt \ot a \supseteq KG_{d-1}t \ot a \supseteq \cdots \supseteq Kt \ot a.$$
Since $M_{d-i} = K[G/H] \ot N_{d-i} = KT \ot KH_iN_d = KG_iT \ot N_d = KG_iM_d,$
where the third equality follows from
$$\ov{t} \ot h_in_d = th_it^{-1} \cdot \ov{t} \ot n_d, {\rm ~and~} g_i \ov{t} \ot n_d = \ov{t'} \ot h(\ov{g_i},\ov{t})t^{-1}h_itn_d \quad (g_i =sh_i),$$
we decomposed $M$ into a fragment direct sum of $|T|$ irreducible $FKH$-fragments all of essential length $d$. So we have
\begin{theorem}
Let $N$ be an irreducible $FKH$-fragment of essential length $d$. Then the induced fragment $M = K[G/H] \ot N$ decomposes into a fragment direct sum of $[G:H]$ irreducible $FKH$-fragments of essential length $d$.
\end{theorem}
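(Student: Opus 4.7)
The plan is to exhibit the decomposition explicitly as $M = \bigoplus_{t\in T}(\ov{t}\otimes N)$, restrict to the filtration $FKH$, and recognise each summand as a conjugate of $N$ that inherits all the needed properties from the irreducibility of $N$.

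First I would compute the $KH$-action on the summands. For $h\in H_j$ we have $h = 1\cdot h$ in the decomposition $g = \sigma(\ov{g}) h$, so the defining formula for the induced $KG$-action specialises to
$$ h\cdot(\ov{t}\otimes n) \;=\; \ov{ht}\otimes h(\ov{1},\ov{t})\,t^{-1}ht\cdot n \;=\; \ov{t}\otimes (t^{-1}ht)n, $$
where $h(\ov{1},\ov{t})=1$ by the usual normalisation of the transversal. Thus $KH_j$ preserves the slot $\ov{t}\otimes N$ for every $t\in T$, and on that slot acts through conjugation by $t^{-1}$. Since each $H_i$ is normal in $G$, $t^{-1}H_i t = H_i$, so the chain
$$ \ov{t}\otimes N_0 \supseteq \ov{t}\otimes N_1 \supseteq \cdots \supseteq \ov{t}\otimes N_d\supset 0 $$
defines an $FKH$-subfragment $P^{(t)}$ of the restriction of $M$ to $FKH$.

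Next I would transport the structure along the $K$-linear bijection $\Phi_t\colon N\to P^{(t)}$, $n\mapsto \ov{t}\otimes n$. This $\Phi_t$ is chain-preserving, and the computation above shows that it intertwines the twisted action $h\cdot n := (t^{-1}ht)n$ on $N$ with the genuine $FKH$-action on $P^{(t)}$. Call this twisted $FKH$-fragment $N^{(t)}$. Because conjugation by $t$ is an automorphism of $H$ that preserves each $H_i$, the fragment axioms for $N^{(t)}$ are formal consequences of those for $N$, and, more importantly, the poset of subfragments of $N^{(t)}$ is canonically identified with that of $N$ (only the action, not the underlying chain of $K$-subspaces, changes). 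Irreducibility of $N$ therefore passes to $N^{(t)}$, and the essential length is preserved, so $P^{(t)}\cong N^{(t)}$ is an irreducible $FKH$-fragment of essential length $d$.

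Finally I would assemble. Since $M_j = \bigoplus_{t\in T}\ov{t}\otimes N_j$ as $K$-spaces for every $j\ge 0$, the summands $P^{(t)}$ are even disjoint at every level, which comfortably implies the disjointness-at-some-level condition in the paper's notion of fragment direct sum. This gives $M = \bigoplus_{t\in T} P^{(t)}$ as $FKH$-fragments, and $|T|=[G:H]$ completes the count. The one delicate step is the cocycle bookkeeping in the first paragraph; once the identity $h\cdot(\ov{t}\otimes n) = \ov{t}\otimes (t^{-1}ht)n$ is in hand, normality of each $H_i$ in $G$ makes the rest formal.
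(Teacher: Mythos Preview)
Your argument is correct and follows the same route as the paper: both identify the summands as the slots $\ov{t}\otimes N$ (the paper's displayed chain $KG_it\otimes a$ appears to be a typo for $KH_it\otimes a$, since $KH_i\cdot(\ov{t}\otimes a)=\ov{t}\otimes N_{d-i}$), use the computation $h\cdot(\ov{t}\otimes n)=\ov{t}\otimes(t^{-1}ht)n$ to see these are $FKH$-subfragments conjugate to $N$, and invoke the level-$d$ splitting $M_d=\bigoplus_t K(\ov{t}\otimes a)$ for the fragment-direct-sum condition. Your write-up is in fact more explicit than the paper's about why conjugation by $t$ preserves irreducibility and essential length.
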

As a corollary, we obtain a Mackey decomposition theorem. Indeed, suppose that $E \triangleleft G$ is another normal subgroup. We have a commutative diagram
$$\xymatrix{ 
1 \ar[r] & H \ar[r] & G \ar[r]^\pi & G/H \ar@/_1pc/[l]_\sigma \ar[r] &1\\
1 \ar[r] &E \cap H \ar@{^{(}->}[u] \ar[r] & E \ar[r]^\pi  \ar@{^{(}->}[u] & \frac{E}{E \cap H}  \ar@/^1pc/[l]^\sigma\ar@{^{(}->}[u] \ar[r] & 1}$$
and by putting $E_i = G_i \cap E$, we obtain two additional group algebra filtrations $FKE$ and $FK(E\cap H)$. By the normality condition, we can begin by fixing an ascending transversal set $1 \subseteq S_1 \subseteq \cdots \subseteq S_d = S$, with $S_i$ a set of right coset representations of $E_i \cap H_i$ in $E_i$. Consider now a subset $U_i \subset G_i$ such that $\{E_itH_i~|~ t \in U_i\}$ is a complete set of double coset representatives. Then $T_i = \{st~|~ s \in S_i\}$ is a complete set of representatives for $H_i$ in $G_i$ and $S_i \subseteq T_i$. In this way, we obtain an ascending transversal set $1 \subseteq T_1 \subseteq \ldots \subseteq T_d = T$, with $T_i$ a set of right coset representations of $H_i$ in $G_i$ and the associated 2-cocylce $h: G/H \times G/H \to H$ is restricting to $h: E/(E \cap H) \times E/(E \cap H) \to E \cap H$. Moreover, for $s \in S$ and $t \in U_d = U$, we have that $h(\ov{s},\ov{t}) = 1$.  In the case that $h$ is filtered, we can induce an $FKH$-fragment $N = N_0 \supseteq N_1 \supseteq \cdots \supseteq N_d \supset 0 \cdots$ to an $FKG$-fragment 
$$ M = N^G = K[T] \ot N.$$
\begin{theorem}
Let $H, E$ be normal subgroups of a finite group $G$, with fixed ascending chain of normal subgroups $$ 1 \triangleleft G_1 \triangleleft \cdots \triangleleft G_d =G.$$
Let $N$ be an $FKH$-fragment with $FKA$ the induced group algebra filtration on $A = H,E,H \cap E$. Then the induced fragment restricted as an $FKE$-fragment $(M^G)_K$ is the fragment direct sum 
$$ (N^G)_E \cong \bigoplus_{t \in U} \big[(t \ot N)_{H\cap E}\big]^E.$$
\end{theorem}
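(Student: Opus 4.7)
The plan is to mimic the classical Mackey decomposition by splitting $G/H$ into its $E$-orbits. Since $H$ is normal in $G$, the stabilizer in $E$ of any $\ov{g} \in G/H$ is $E \cap gHg^{-1} = E \cap H$, so the $E$-orbits on $G/H$ are in bijection with the double cosets $E\backslash G/H$, each orbit isomorphic to $E/(E \cap H)$. The compatible transversals from the construction, $S \subseteq T = \{st \colon s \in S,\, t \in U\}$, then give a direct sum decomposition of $K$-spaces
$$K[G/H] \ot N \;=\; \bigoplus_{t \in U} V_t, \qquad V_t \;=\; \bigoplus_{s \in S} K\,\ov{st} \ot N,$$
compatible with the filtration levels $M_j = K[G/H] \ot N_j$.

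Next I would check that each $V_t$ is an $FKE$-subfragment of $(N^G)_E$. For $e = s_e h_e \in E_j$ (with $s_e \in S_j$ and $h_e \in H_j \cap E_j$), the formula defining the $G$-action on $N^G$ gives
$$e \cdot (\ov{st} \ot n) \;=\; \ov{s_e s t} \ot h(\ov e, \ov{st})\,(st)^{-1} h_e (st) \cdot n,$$
and since $s_e s \in E$ we have $\ov{s_e s t} \in EtH/H$, which again has the form $s' t$ for some $s' \in S$. The $FK(H \cap E)$-fragment $(t \ot N)_{H \cap E}$ carries the restricted action $h \cdot (t \ot n) = t \ot (t^{-1} h t) \cdot n$, which lands in $t \ot N$ because $t^{-1}(H_j \cap E_j) t = H_j \cap E_j$ by normality of both $H$ and $E$.

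The main step is to define the candidate isomorphism
$$\varphi_t : \bigl[(t \ot N)_{H \cap E}\bigr]^E \longrightarrow V_t, \qquad \ov{s} \ot (t \ot n) \longmapsto \ov{st} \ot n,$$
and prove $E$-equivariance, which reduces to the cocycle identity
$$h(\ov e, \ov{st}) \;=\; t^{-1}\, h_E(\ov e, \ov s)\, t.$$
This follows by writing $s_e s = \sigma_E(\ov{s_e s})\, h_E(\ov e, \ov s)$ in $E$ and multiplying by $t$ on the right:
$$s_e s t \;=\; \sigma_E(\ov{s_e s})\, t \cdot (t^{-1} h_E(\ov e, \ov s) t) \;=\; \sigma_G(\ov{s_e s t}) \cdot (t^{-1} h_E(\ov e, \ov s) t),$$
using that $\sigma_G(\ov{s_e s t}) = \sigma_E(\ov{s_e s})\, t \in T$; comparison with the $G$-cocycle relation $s_e s t = \sigma_G(\ov{s_e s t})\, h(\ov e, \ov{st})$ yields the identity. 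Plugging it into both sides shows the twists match, so $\varphi_t$ intertwines the $E$-action; it is plainly a $K$-linear bijection preserving filtration levels, hence an $FKE$-fragment isomorphism.

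The main obstacle is the cocycle bookkeeping above; the hypothesis that $h$ is filtered (restricting to $E/(E \cap H) \times E/(E \cap H) \to E \cap H$) together with the compatible choice $S \subseteq T$ are exactly what ensure the identity holds at matching filtration degrees, so that $\varphi_t$ preserves the level-wise action of $F_jKE$ and not merely the ungraded $KE$-action. Summing the $\varphi_t$ over $t \in U$ then assembles the desired fragment direct sum decomposition.
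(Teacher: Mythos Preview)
Your proposal is correct and follows essentially the same route as the paper: decompose $K[G/H]\ot N$ along $T=\{st : s\in S,\ t\in U\}$, define $\varphi_t(\ov{s}\ot(t\ot n))=\ov{st}\ot n$, and verify $FKE$-equivariance via a cocycle computation, then conclude by a dimension count. The only difference is cosmetic: you derive the identity $h(\ov{e},\ov{st})=t^{-1}h_E(\ov{e},\ov{s})\,t$ directly from $\sigma_G(\ov{s_e s t})=\sigma_E(\ov{s_e s})\,t$, whereas the paper inserts the trivial factors $h(\ov{s},\ov{t})=h(\ov{ks},\ov{t})=1$ and then invokes the general 2-cocycle condition~\eqref{cocycle}; these are two phrasings of the same one-line calculation.
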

\begin{proof}
By construction $N^G = \bigoplus_{t \in T} t \ot N$, and for fixed $t \in T$, the descending chain
$$t \ot N \supseteq t \ot N_1 \supseteq \cdots \supseteq t \ot N_0$$
is easily seen to be an $FK(H\cap E)$-fragment. For $t \in U$ define
$$\varphi: \bigoplus_{s \in S} s \ot t \ot N \to \bigoplus_{s \in S} st \ot N,~ s \ot t \ot n \mapsto st \ot m.$$
Let $k = s_1z_1 \in K_i$ with $s_1 \in S_i, z_i \in E_i \cap H_i$ and $n \in N_i$. On the one hand we have
\begin{eqnarray*}
k \cdot \varphi(s \ot t \ot n) &=& k \cdot st \ot n = \ov{kst} \ot h(\ov{k},\ov{st})t^{-1}s^{-1}z_1stn\\
&=& \ov{kst} \ot h(\ov{k},\ov{st})h(\ov{s},\ov{t})t^{-1}s^{-1}z_1stn.
\end{eqnarray*}
On the other hand, we calculate
\begin{eqnarray*}
\varphi(k \cdot s \ot t \ot n) &=& \varphi(\ov{ks} \ot h(\ov{k},\ov{s})s^{-1}z_1s \cdot (t \ot n))\\
&=& \varphi(\ov{ks} \ot t \ot t^{-1}h(\ov{k},\ov{s})s^{-1}z_1st n)\\
&=& \ov{kst} \ot h(\ov{ks},\ov{t})t^{-1}h(\ov{k},\ov{s})s^{-1}z_1st n).
\end{eqnarray*}
Both expressions are equal by the 2-cocycle condition \eqref{cocycle}, hence $\varphi$ is a morphism of $FKE$-fragments. The map is easily seen to be surjective and as $K$-spaces the domain and codomain have the same dimension, so we have an isomorphism of $FKE$-fragments. The result now follows.
\end{proof}
Let us now consider the going down direction of the Clifford theory. So suppose that $M$ is an irreducible $FKG$-fragment and consider $M = M_H$ as an $FKH$-fragment. By irreducibility, $M_d = Ka$ is a one-dimensional $K$-vectorspace. To begin, we observe that $KH_1a \subset M_{d-1}$ is a $KH_1$-submodule, which decomposes into simple $KH_1$-modules
$$KH_1a = S^0_1 \oplus \cdots \oplus S^0_{e_0},$$
since $KH_1$ is semisimple. Let $S_1^1,\ldots, S_{e_1}^1$ be simple $KH_1$-modules such that 
$$M^H_{d-1} = KH_1a \oplus S^1_1 \oplus \cdots \oplus S^1_{e_1}.$$
Subsequently, any $1 \leq i \leq e_1$ gives a (trivial) $FKH$-subfragment
$$ KH_{d-1}S^1_i \supseteq \ldots \supseteq KH_2S^1_i \supseteq S^1_i \supseteq 0 $$
of essential length $d-1$. Next, we find simple $KH_2$-modules $S^2_1,\ldots,S^2_{e_2}$ such that
$$M^H_{d-2} = KH_2(S^0_1 \oplus \cdots \oplus S^0_{e_0}) + KH_2(S^1_1 \oplus \cdots \oplus S^1_{e_1}) \oplus S^2_1 \oplus \cdots \oplus S^2_{e_2}.$$
Observe that the first sum no longer needs to be direct, testifying to the higher complexity of fragment structures. For every $1 \leq i \leq e_2$ we again obtain a (trivial) $FKH$-subfragment
$$KH_{d-2}S^2_i \supseteq \ldots \supseteq S^2_i \supseteq 0 \supseteq 0$$
of essential length $d-2$. Proceeding in this way, we arrive at a decomposition

$$M^H = M_0^H = KH(S^0_1 \oplus \cdots \oplus S^0_{e_0}) + KH(S^1_1 \oplus \cdots \oplus S^2_{e_2}) + \cdots $$
$$+ ~KH(S^{d-1}_1 \oplus \cdots \oplus S^{d-1}_{e_{d-1}}) \oplus S^d_1 \oplus \cdots \oplus S^d_{e_d}.
$$
Summarizing, we decomposed $M^H$ as a fragment direct sum of ``irreducible" fragments with lowest non-zero part $S^i_j$ ($0 \leq i \leq d, 1 \leq j \leq e_i$) and of essential length $d-i$.\\

Inspired by the classical Clifford theory, we can say something more. In our construction, we viewed a simple $KH_i$-module $S = S^i_j$ inside the $KH_{i+1}$-module $KH_{i+1}S$. Since we do not know whether the latter is irreducible, this is not entirely the classical embedding problem. Nonetheless, we can mimic the construction of \cite{Cl} and use a different approach that will lead to so-called decomposition groups. We will see however, that these decomposition groups will lie between $H_i$ and $H_{i+1}$ and not between $H_i$ and $G_i$ as in the classical case.\\

In the decomposition of $M^H_{d-1}$ into simple $H_1$-modules above, we fix some $S = S^1_1$. For any $h \in H_2 \setminus H_1$, $hS$ and $S$ are conjugate $KH_1$-modules relative to $H_2$. Moreover, $hS$ is also simple. If $\forall h \in H_2$, we would have that $hS = S$, $S$ would be a simple $KH_2$-module and thus $KH_2S = S$. Otherwise, we find a finite number of elements $h_2,\ldots, h_r \in H_2 \setminus H_1$ such that
$$ S \oplus h_2S \oplus \cdots \oplus h_rS = KH_2S.$$ 
After regrouping all equivalent $H_1$-modules together, we get
$$ R_1 \oplus R_2 \oplus \cdots \oplus R_m = KH_2S,$$
where the $R_i$ are the sum of equivalent modules. We define the subgroup $H'_{2,1} \subset H_2$ of elements which leave $R_1$ invariant. By definition, $KH_2S$ is transitive, so all the spaces $R_i$ must have the same dimension. Moreover, the $KH'_{2,1}$-module $R_1$ generates $KH_2S$. However, $R_1$ need not be a simple $H'_{2,1}$-module, since we do not have that $KH_2S$ is simple (cf. \cite[Observations before section 3]{Cl}). E.g. for $\mathbb{Z}_2 \subset \mathbb{Z}_4$, the two-dimensional $\mathbb{Z}_4$-representation $V$ defined by
$$ 1 \mapsto \begin{pmatrix} i & 0 \\ 0 & i \end{pmatrix}$$
has $S = \mathbb{C}e_1$ has simple $\mathbb{Z}_2$-representation and $V = S \oplus 1\cdot S$, both of which are isomorphic to the non-trivial simple $\mathbb{Z}_2$-representation. Therefore $G' = G$ and $R_1 = V$ is not simple. In case $R_1$ is not simple, one considers the irreducible component which contains $S$ and introduces a (possibly) bigger decomposition group $H_{2,1}''$. Hence, after reducing to the Clifford setting and assuming $K$ to be algebraically closed, the embedding problem (\theref{classic}) gives a one-to-one correspondence between the embedding of $S$ into an irreducible $H_{2,1}''$-module and factor sets 
$$\alpha: H_{2,1}''/H_1 \times H_{2,1}''/H_1 \to \mathbb{C}$$
and modules of finite degree of the algebra $\mathfrak{a} = \mathbb{C}[H_{2,1}''/H_1]$ corresponding to $\alpha^{-1}$, i.e. in which the multiplication is given by
$$ \ov{h_1} \ov{h_2} = \alpha^{-1}(\ov{h_1},\ov{h_2})\ov{h_1h_2}.$$
 
So at stage $i$, we decomposed $M_{d-i}$ into 
$$KH_ia + KH_i(S^1_1 \oplus \cdots \oplus S^1_{e_1}) + \cdots + KH_i(S^{i-1}_1 \oplus \cdots \oplus S^{i-1}_{e_{i-1}}) \oplus S^i_1 \oplus \cdots \oplus S^i_{e_i},$$
and the $S^i_j$ ($1 \leq j \leq e_i$) give rise to decomposition groups $H_i \subset H''_{i+1,j} \subset H_{i+1}$ and embeddings $S^i_j \subset V^i_j$, where the latter is a simple $H''_{i+1,j}$-module and generates a part of $KH_{i+1}S^i_j$.\\

However, we are not yet satisfied by our decomposition of $M^H$. In fact, we would like a more subtle relation between the $H_i$'s and $G_i$'s, as we already remarked above. Let's reconsider the decomposition of $M_{d-1}^H$ into simple $H_1$-modules 
$$ M^H_{d-1} = S^0_1 \oplus \cdots \oplus S^0_{e_0} \oplus S^1_1 \oplus \cdots \oplus S^1_{e_1},$$
in which the $S^0_i$ add up to $KH_1a$. Some of these $S^0_i$ are (simple) $G_1$-modules. The others give rise to a $KH_1 \subseteq KG_1$-fragment $M_{d-1} \supseteq S^0_i$. In any case, since $M_{d-1} = KG_1a$ and by the irreducibility of $M$ we know that every $S^1_j$ must be of the form $gS^0_{j_i}$ for some $g \in G_1 \setminus H_1$. Indeed, otherwise we would have a non-trivial $FKG$-subfragment by killing such an $S^1_j$. Therefore, the simple $S^0_j$ determine $M_{d-1}$. If $S^0_j$ is a $G_1$-module, there is nothing to it. If not, then 
$$S^0_j \oplus g_2S^0_j \oplus \cdots \oplus g_{m_j}S^0_j$$
for some $g_2,\ldots, g_{m_j} \in G_1 \setminus H_1$ appears in the decomposition of $M_{d-1}$. Therefore, assume that $S^0_1, \ldots, S^0_{f_0}$ are the building blocks of $M_{d-1}^H$, by which we mean that these $S^0_i$'s contain all the $G_1$-modules and exactly enough $H_1$-modules such that all its conjugates relative to $G_1$ yield the decomposition of $M_{d-1}^H$. If one then fixes an $S = S^0_j$, one can introduce a decomposition group $G_1' \subset G_1$ like in the classical theory. Note that by our construction, every building block gives rise to a (possibly different) decomposition group! Moreover, if $S$ happens to be a $G_1$-module, then $G_1' = G_1$.\\

Now, we investigate what happens if we look at $M_{d-2} = KG_2a$. Recall that we considered $KH_2S$. If this $H_2$-module is even a $G_2$-module, then all the conjugates of $S$ relative to $G_1$ become equal inside $M_{d-2}^H$:
$$KH_2gS = gKH_2S = KH_2S.$$
If $KH_2S$ is not a $G_2$-module, then $S$ and $gS$ remain conjugated. By definition, there exist elements $h_2,\ldots, h_m \in H_2 \setminus H_1$ such that
$$KH_2S = S \oplus h_2S \oplus \cdots \oplus h_mS.$$
For any $g \in G_1$ we get
$$KH_2gS = gKH_2S = g(S \oplus h_2S \oplus \cdots \oplus h_mS).$$
We easily calculate
$$KH_1gh_iS = gK(g^{-1}H_1g)h_iS = gh_i K(h_i^{-1}H_1h_i)S = gh_iKH_1S,$$
since $ H_1 \triangleleft G_1$ and $H_1 \triangleleft H_2$. Hence, $gh_iS$ is a simple $H_1$-module and 
$$ KH_2gS = gS \oplus gh_2S \oplus \cdots \oplus gh_mS.$$
We also deduce that the simple $H_1$-factors of $KH_2S$ and $KH_2gS$ are either all the same, or all different. So in total, we obtain that all the conjugates of $S$ relative to $G_1$ contribute to the decomposition of $KG_2S$ into simple $H_1$-modules. This also explains that two different building blocks $S$ and $S'$ don't affect one another at a higher stage.\\

Now, if $KG_1S = S \oplus g_2S \oplus \cdots \oplus g_lS$ as $H_1$-modules for some $g_2,\ldots, g_l \in G_1 \setminus H_1$, then by the above we obtain that
$$KG_2S = KH_2S \oplus g_2KH_2S \oplus \cdots \oplus  g_nKH_2S \oplus r_1KH_2S \oplus \cdots \oplus r_tKH_2S,$$
for $n \leq l$ (up to some possible reordering) and $r_1,\ldots, r_t \in G_2 \setminus G_1H_2$.
Hence we again can define $G_2' \subset G_2$ to be the subgroup of elements that leave the sum of all simples equivalent to $KH_2S$ invariant. 
From our discussion above, if $ S \cong g_iS$ as $H_1$-modules, then $KH_2S$ and  $g_iKH_2S$ have the same decomposition into simple $H_1$-components. However, this does not imply that $KH_2S$ and $g_iKH_2$ are isomorphic as $H_2$-modules! A disappointing side effect is that there is no chance at all that $G_1' \subset G_2'$. Observe moreover, that if our normal chain of subgroups is maximal, then $G_2 = G_1H_2$ and no $r$'s appear (see below).\\

Before we step up the ladder one stair further, we establish the foregoing for a concrete example.

\begin{example}
Look at the following graph of groups
$$\begin{array}{ccc}
\mathbb{Z}_4^j = \{1,j,-1,-j\} & \triangleleft& Q_8 = <-1,i,j | i^2 = j^2 =  -1, ij = -ji >\\
\triangledown & & \triangledown\\
\mathbb{Z}_2 = \{1,-1\} & \triangleleft & \mathbb{Z}_4^i = \{1,i,-1,-i\}
\end{array}$$
As transversal sets, we choose $T_1 = T_2 = \{1, j\}$ and one checks that the associated 2-cocycle $h$ takes values in $H_1 = \mathbb{Z}_2$. This implies that $h$ is filtered. 
We know that $Q_8$ has four 1-dimensional representations, given by
$$\begin{array}{c}
T_1 : i \mapsto 1,~ j \mapsto 1 \\
T_2 : i \mapsto -1,~ j \mapsto 1 \\
T_3 : i \mapsto 1,~ j \mapsto -1 \\
T_4 : i \mapsto - 1,~ j \mapsto - 1 
\end{array}$$
and one simple 2-dimensional representation
$$ U: i \mapsto \begin{pmatrix} i & 0 \\ 0 & -i \end{pmatrix},~ j \mapsto \begin{pmatrix} 0 & - 1 \\ 1 & 0 \end{pmatrix}$$
If we consider $U$ as a $\mathbb{Z}_4^j$-representation, we diagonalise
$$ \frac{1}{2}i \begin{pmatrix} - i & - 1 \\ - i  & 1 \end{pmatrix}\begin{pmatrix} 0 & - 1 \\ 1 & 0 \end{pmatrix}\begin{pmatrix} 1 &  1 \\ i & - i \end{pmatrix} = \begin{pmatrix} -i & 0\\ 0 & i \end{pmatrix}$$
and under base change 
$$\begin{array}{c}
e_1 = f_1 + if_2\\
e_2 = f_1 - i f_2
\end{array}$$
we get $U = V^{-i} \oplus V^i$, where $V^{i}$ is the simple $\mathbb{Z}_4$-representation, defined by $ j \mapsto i$ and similarly for $V^{-j}$. Consider now the $F\mathbb{C}Q_8$-fragment
$$ M = U \oplus T_3 \oplus T_2 \supseteq V^{-i} \oplus T_3 \oplus T_2 \supseteq \Delta$$
in which $\Delta$ stands for the diagonal of $M_1$. Let $\{f_1,f_2,e_3,e_4\}$ be a basis for $M_0$ establishing the direct sum decomposition. Then we will work with $\{e_1,e_3,e_4\}$ and $\{e_1+e_3+e_4\}$ as bases for $M_1$, resp. $M_2$. One convinces oneself that this is an irreducible fragment. We calculate
$$\mathbb{C}\mathbb{Z}_2\Delta = \mathbb{C}(e_3+e_4) \oplus \mathbb{C}e_1$$
as $\mathbb{Z}_2$-modules, and $\{\mathbb{C}e_1, \mathbb{C}(e_3+e_4)\}$ is a minimal set of building blocks. Furthermore
$$ M_1 = \mathbb{C}\mathbb{Z}_2\Delta \oplus j \cdot \mathbb{C}(e_3 + e_4),$$
in which the last component equals $\mathbb{C}(-e_3 + e_4)$. The first building block $S_1 = \mathbb{C}e_1$ is a $\mathbb{Z}_4^j$-module, so $G_1' = G_1$. For the second building block $S_2 = \mathbb{C}(e_3 + e_4)$, we have $S_2 \cong jS_2$, whence $G_1' = G_1 = \mathbb{Z}_4^j$ as well. Subsequently, we have
$$KH_2S_1 = \mathbb{C}\mathbb{Z}_4^ie_1 = \mathbb{C}e_1 \oplus i \cdot \mathbb{C}e_1 = \mathbb{C}e_1 \oplus \mathbb{C}e_2 = U,$$
and
$$ KH_2S_2 =  \mathbb{C}\mathbb{Z}_4^i(e_3 + e_4) = \mathbb{C}(e_3 + e_4) \oplus i \cdot \mathbb{C}(e_3 + e_4).$$
Since $jKH_2S_1 = KH_2S_1$ and $jKH_2S_2 = KH_2S_2$, we have that
$$ \mathbb{C}Q_8e_1 = \mathbb{C}\mathbb{Z}_4^ie_1 = U,~ \mathbb{C}Q_8(e_3 + e_4) = \mathbb{C}\mathbb{Z}_4^i(e_3 + e_4) = T_3 \oplus T_4.$$
Hence in both cases $G_2' = Q_8$.
\end{example}
Now, if $d > 2$, then we would have to look at $KH_3S$, but one can no longer apply the same techniques, since we do not know whether $H_1$ is normal in $H_3$. However, for every building block one can decompose $KH_2S$ into simple $H_2$-modules 
$$ KH_2S = T_1 \oplus \cdots \oplus T_n,$$
extend to a decomposition of $KG_2S$, choose a new set of building blocks and repeat the foregoing argument. So at every stage, the relation between $G_i, H_i, G_{i+1}$ and $H_{i+1}$ comes into play. Indeed, if $T$ is a simple $H_i$-rep, then you decompose 
$$\begin{array}{ll}
KG_iT = T \oplus g_2T \oplus \cdots \oplus g_mT & g_2,\ldots,g_m \in G_{i} \setminus H_i\\
KH_{i+1}T = T \oplus h_2T \oplus \cdots \oplus h_tT & h_2,\ldots, h_t \in H_{i+1}\setminus H_i\\
KG_{i+1}T = KH_{i+1}T \oplus g_2KH_{i+1}T \oplus \cdots  & \\
\quad \oplus g_nKH_{i+1}T \oplus r_1KH_{i+1}T \oplus \cdots \oplus r_uKH_{i+1}T &
\end{array}$$
for some $n \leq m$ and $r_2,\ldots,r_u \in G_{i+1} \setminus G_iH_{i+1}$ and we obtain a decomposition group $H_{i+1} \subseteq G'_{i+1,T} \subseteq G_{i+1}$. Unfortunately, one has no hope that an ascending chain of decomposition groups arises. In the previous example, we would have
$$ KH_2S_2 = \mathbb{C}\mathbb{Z}_4^iS_2 = T_3 \oplus T_2$$
as decomposition into simple $\mathbb{Z}_4^i$-modules, but $S_2 = \mathbb{C}(e_3 + e_4)$ does not fits nicely in one of the two simple components.\\

We summarize the foregoing in
\begin{theorem}
Let $K$ be a field of characteristic zero and let $H \triangleleft G$ be a normal subgroup of some finite group $G$, with fixed ascending chain of normal subgroups
$$1 \triangleleft G_1 \triangleleft \cdots \triangleleft G_d =G.$$
If $M$ is an irreducible $FKG$-fragment of essential length $d$, then $M_H$ is either an irreducible $FKH$-fragment or decomposes into a fragment direct sum 
$$M_H = \bigoplus_S  M(S),$$
where $ M(S) = KGS \supseteq KG_{d-1}S \supseteq \cdots \supseteq KG_1S \supset M_{d} \cap KG_1S \supseteq 0 \cdots$, and 
where the sum runs over a set of building blocks for $KG_1M_d$, consisting of $H_1$-modules. Moreover, to every $S$, we have a decomposition group $H_1 \subseteq G_{1,S}^{'} \subseteq G_1$.\\
In the latter case, to every such $S$ there is associated a set of building blocks of $KG_2S$ consisting of $H_2$-modules and we obtain a fragment direct sum 
$$M(S) = \bigoplus_T M(T),$$
where 
$$M(T) = KGT \supseteq KG_{d-1}T \supseteq \cdots \supseteq KG_2T \supseteq KG_1S \cap KG_2T \supseteq M_d  \cap KG_2T \supseteq 0 \cdots$$
Again the sum runs over the set of building blocks and we obtain decomposition groups $H_2 \subseteq G_{2,T}^{'} \subseteq G_2$. We obtain similar fragment decompostions of the $M(T)$ and so we arrive at decomposition groups $H_i \subseteq G_i' \subseteq G_i$ at every stage $1 \leq i \leq d$.
\end{theorem}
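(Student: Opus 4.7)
The plan is to convert the preceding discussion into an inductive argument on the filtration depth. Irreducibility of $M$ together with \lemref{el} yields $M_d = Ka$ one-dimensional and $M_{d-i} = KG_i\cdot a$ for all $0\le i\le d$, so the entire fragment is determined by $a$ and the actions of the $G_i$'s. Viewing each $M_{d-i}$ as a $KH_i$-module by restriction furnishes the $FKH$-fragment structure on $M_H$, and the task is to decompose it.

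At the first stage, ${\rm char}(K)=0$ makes $KH_1$ semisimple, so Maschke's theorem yields a semisimple decomposition of the $KG_1$-module $M_{d-1} = KG_1\cdot a$ into simple $KH_1$-modules. I would select a minimal family $\{S\}$ of simples whose $G_1$-conjugates exhaust this decomposition; every simple $KH_1$-summand of $M_{d-1}$ must be a $G_1$-conjugate of some chosen $S$, because otherwise excising it would produce a nontrivial $FKG$-subfragment of $M$, contradicting irreducibility. For each building block $S$, define $M(S)$ by $M(S)_{d-i} = KG_i\cdot S$, and verify that the restrictions of the $FKG$-operations make $M(S)$ into an $FKH$-subfragment. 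The identity $M_H = \bigoplus_S M(S)$ is then a fragment direct sum in the sense explained before \cite[Theorem~4.7]{EVO}: disjointness is witnessed at level $d-1$ by the direct sum $\bigoplus_S S$ of the building blocks. The classical Clifford subgroup $G'_{1,S} = \{g\in G_1 : gS \cong S\text{ as }KH_1\text{-modules}\}$ provides the advertised decomposition group with $H_1 \subseteq G'_{1,S} \subseteq G_1$.

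The inductive step repeats this analysis inside each $M(S)$ at the next filtration level: decompose $KH_2\cdot S$ into simple $KH_2$-modules, complete to a semisimple decomposition of $KG_2\cdot S$, and select building blocks $\{T\}$ by the same minimality principle. The key calculation $KH_1\cdot gh\cdot S = gh\cdot KH_1\cdot S$ appearing in the discussion above, which uses $H_1 \triangleleft G_1$ and $H_1 \triangleleft H_2$, guarantees that the $G_1$-conjugates of $S$ distribute cleanly among the $KH_2\cdot T$. Defining $M(T)$ levelwise as in the statement and iterating stage by stage produces the decomposition groups $H_i \subseteq G'_{i,T} \subseteq G_i$ for every $1 \leq i \leq d$.

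The main obstacle is verifying fragment-directness at every refinement step: since higher-level sums are typically not direct as vector spaces (as both the preamble to the theorem and the $\mathbb{Z}_4\subset Q_8$ example make explicit), one must rely on the weakened notion of fragment direct sum that requires disjointness only at the lowest nontrivial level, and confirm it carefully each time one descends further in the chain. Once this is secured, the remainder is bookkeeping and a stage-by-stage reuse of the classical Clifford construction.
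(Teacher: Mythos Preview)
Your proposal is correct and follows essentially the same route as the paper, which in fact presents the theorem as a summary of the preceding discussion rather than giving a separate proof. Both arguments rest on the same ingredients: one-dimensionality of $M_d$ and $M_{d-i}=KG_iM_d$ from irreducibility via \lemref{el}, Maschke decomposition of $M_{d-1}$ into simple $KH_1$-modules, selection of building blocks whose $G_1$-conjugates exhaust $M_{d-1}$ (using irreducibility to rule out extraneous summands), the classical Clifford decomposition group, and the inductive passage to higher levels through the identity $KH_1\,gh\,S = gh\,KH_1\,S$.
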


\begin{example}
We recover the classical Clifford theory for a normal subgroup $H \triangleleft G$, if we consider the trivial filtration
$$\begin{array}{ccc}
K &\subset& KG\\
 \veq && \triangledown\\
 K & \subset &KH.
 \end{array}$$
Indeed, suppose that $M \supseteq M_1$ is an irreducible $FKG$-fragment, with $M$ a simple $G$-module. We have that $M_1 = Ka$ is one-dimensional and $M = KGa$. If $M \supseteq M_1$ is also irreducible as $FKH$-fragment, then $M = KHa$ and it follows that $M$ is a simple $H$-module. If the $FKH$-fragment is not irreducible, we have that $KHa \neq KGa = M$. According to our approach, we decompose $KHa$ into simple $H$-modules $KHa = S_1 \oplus \cdots \oplus S_n$. Then we deduced that $M = KGa$ can be decomposed as a sum of simple $H$-modules which are all conjugate relative to G to one of the $S_i$. A set of building blocks that contains some $S_i$ and $S_j$ would entail that these simple $H$-modules are not conjugate relative to $G$ to one another. But then we would have that $KGS \subset M$ would be a proper $G$-submodule, a contradiction. Therefore, $\{S_1\}$ is a set of building blocks and we find that $M$ can be decomposed as 
$$M = S_1 \oplus g_2S_1 \oplus \cdots \oplus g_mS_1$$
for some $g_2,\ldots, g_m \in G \setminus H$.
\end{example}
 
\section{Geometric aspect of decomposition groups}

Suppose that the normal chain of subgroups is maximal. Observe that an irreducible $FKG$-fragment is completely determined by a $KG$-module $M = M_0$ and a one-dimensional $K$-subspace $Ka \subset M$. However, there are some constraints on the element $a \in M$; suppose that
\begin{equation}\label{M}
M = S_1^{n_1} \oplus \cdots \oplus S_k^{n_k} \oplus T_1^{m_1} \oplus \cdots \oplus T_l^{m_l},
\end{equation}
is a decomposition of $M$ into simple $G$-modules and $(e_i)_{i \in I}$ an ordered basis establishing this decomposition. Moreover, we assume that all the $T_j$ are 1 dimensional and the $S_i$ are $s_i$-dimensional with $s_i > 1$. Let $a = \sum a_ie_i$. If $Ke_i = T_j$, one of the one dimensional $G$-representations occurring in \eqref{M}, then since $M$ must equal $KGa$, the coefficient $a_i$ of $a$ is nonzero. For every $S_i$-module, we obtain a point $[a_0:a_1: \cdots : a_{s_i -1}] \in \mathbb{P}^{s_i-1}$ and it is clear that the choice of $a$ is indeed independent up to a scalar multiplication of the coefficients per simple component of $M$. In total, we obtain a point in the projective variety
 $$\mathbb{P}^{s_1-1} \times \cdots \times \mathbb{P}^{s_1-1} \times \mathbb{P}^{s_2-1} \times \cdots \times \mathbb{P}^{s_2-1} \times \cdots \times \mathbb{P}^{s_k-1} \times \cdots \times \mathbb{P}^{s_k-1},$$
 with $n_1$ factors $\mathbb{P}^{s_1-1}$, etc. \\

Since $KH_iT = KG_iT$ for all $T$ and $ 0 \leq i \leq d$, we don't get any non-trivial decomposition groups from the one dimensional simples. Concerning the higher dimensional simples, we fix $S = S_1$, which is $s = s_1$-dimensional and the part $[a_0: \cdots : a_{s-1}]$ of $a$. We can represent $KH_1a$ as an $ s \times |H_1|$-matrix $A_1$, of which the $i$-th column gives the action of $h_i$ on $a$ in the basis $\{e_0,\ldots,e_{s-1}\}$ of $S$. Similary, we introduce a matrix $B_1$ for $KG_1a$. Clearly, if $\rk(A_1) = \rk(B_1)$, then $KG_1a = KH_1a$ and $G_1' = G_1$ follows. Hence we obtain a Zariski open or closed set $X \subset \mathbb{P}^{s-1}$, on which no non-trivial decomposition groups occur. In our example above, for $S = U$ is 2-dimensional with basis $\{f_1,f_2\}$ and $a = a_0f_1 + a_1f_2$, we calculate (for ordering $H_1 = \mathbb{Z}_2 = \{1,-1\}$ and $G_1 = \mathbb{Z}_4^j = \{1,j,-1,-j\})$
$$A_1 = \begin{pmatrix} a_0 & -a_0  \\  a_1 & -a_1  \end{pmatrix},~ B_1 = \begin{pmatrix} a_0 & -a_1 & -a_0 & a_1 \\ a_1 & a_0 & -a_1 & -a_0 \end{pmatrix}$$
The rank of $A_1$ is always 1, so $X = \mathbb{V}(x_0^2 + x_1^2)$ if $\mathbb{P} = \Proj \mathbb{C}[x_0,x_1]$, which consists of two points $\{[1:i], [1:-i]\}$. Remark that these points correspond to the base change which establishes the decomposition of $U$ into two simple $\mathbb{Z}_4^j$-modules. We considered the point $[1:i]$ and obtained indeed that $G_1' = G_1$.\\

For $H_2$ and $G_2$ we introduce the (extended) matrices $A_2$ and $B_2$ as well as the Zariski open $X_2$. Now on $X_1 = X$, we have $KH_1a = KG_1a$, whence
$$KH_2a = KH_2KH_1a = KH_2KG_1a = KG_2a.$$
The last equality follows since our normal chain is maximal. Indeed, since $h_2g_1h_2'g_1' = h_2h_2'h_2'^{-1}g_1h_2'g_1' \in H_2G_1$ and $(hg)^{-1} = g^{-1}h^{-1} = h^{-1}hg^{-1}h^{-1} \in H_2G_1$, we see that $H_2G_1$ is a group and since 
$$g_2^{-1}h_2g_1g_2 = h_2h_2^{-1}g_2^{-1}h_2g_2g_2^{-1}g_1g_2 \in H_2G_1,$$
we have that $ G_1 \subset H_2G_1 \triangleleft G_2$, whence $H_2G_1 = G_2$. Therefore, $X_1 \subset X_2 \subset \mathbb{P}^{s-1}$. In the example
$$ A_2 = \begin{pmatrix} a_0 & ia_0 & -a_0 & -ia_0 \\ a_1 & -ia_1 & -a_1 & ia_1 \end{pmatrix},$$
which has rank 2 on $\mathbb{X}(x_0) \cap \mathbb{X}(x_1) = \mathbb{X}(x_0x_1)$. So the possible interesting cases are \newline$[a_0:a_1] \in \mathbb{P} \setminus \mathbb{X}(x_0x_1) = \{ [1:0],[0:1]\}$. Take $a = [1:0]$, the other case being analogous. We have
$$M_1 = \mathbb{C}\mathbb{Z}_4^jf_1 = \mathbb{C}f_1 \oplus j\cdot \mathbb{C}f_1 = \mathbb{C}f_1 \oplus \mathbb{C}f_2.$$
Both $\mathbb{Z}_2$-modules are equivalent, so $G_1' = G_1$. One stair further, we have
$$ \mathbb{C}Q_8 f_1 = \mathbb{C}f_1 \oplus j \cdot \mathbb{C}f_1 = \mathbb{C}f_1 \oplus \mathbb{C}f_2,$$
but both components are no longer isomorphic as $\mathbb{Z}_4^i$-modules. Therefore, $G_2' = \mathbb{Z}^i_4$. Observe that $G_1' \not\subset G_2'$!\\

\begin{example}
Look at the following graph of groups
$$\begin{array}{ccc}
<a> & \triangleleft & D_8 = <a,x | a^4 = x^2 =  1,~ xax^{-1} = a^{-1} >\\
\triangledown & & \triangledown\\
\{e, a^2\} & \triangleleft &  \{e,x,a^2,a^2x\}
\end{array}$$
and consider the two-dimensional simple representation $S$ defined by
$$a \mapsto \begin{pmatrix} i & 0 \\ 0 & - i \end{pmatrix},~ x \mapsto \begin{pmatrix} 0 & 1 \\ 1 & 0 \end{pmatrix}.$$
\end{example}
Then one calculates that $X_1 = \mathbb{V}(x_0x_1) = \{ [1:0], [0:1]\} \subset X_2 = \mathbb{X}(x_0^2 - x_1^2)$.


\begin{thebibliography}{99}

\bibitem{CVo} F. Caenepeel, F. Van Oystaeyen, \emph{Localization and Sheaves of Glider Representations}, \emph{preprint}, 	arXiv:1602.05338.

\bibitem{Cl} A.H. Clifford, \emph{Representations Induced in an Invariant Subgroup}, \emph{Ann. of Math. (2)} {\bf 38} (1937), no.  3, 533-550.

\bibitem{EVO} M. El Baroudy, F. Van Oystaeyen, \emph{Fragments with Finiteness Condtions in Particular over Group Rings}, \emph{Comm. Algebra}  {\bf 28} (2000), no. 1, 321-336.


\bibitem{NVo1} S. Nawal, F. Van Oystaeyen, \emph{An Introduction of Fragmented Structures over Filtered Rings}, {Comm. Algebra} {\bf 23} (1995), no. 3, 975-993.

\end{thebibliography}
\end{document}